\renewcommand{\div}{\operatorname{div}}
\newcommand{\norm}[1]{\| #1 \|}
\newcommand{\norw}[2]{\| #1 \|_{#2}}
\newcommand{\triplenorm}[1]{{\left\vert\kern-0.25ex\left\vert\kern-0.25ex\left\vert #1 
    \right\vert\kern-0.25ex\right\vert\kern-0.25ex\right\vert}}
\newcommand{\Wnorm}[1]{\triplenorm{#1}}
\newcommand{\inner}[2]{\left( #1, #2\right)}
\newcommand{\grad}{\nabla}
\newcommand{\eps}{\varepsilon}
\newcommand{\trace}{\mathrm{tr}}
\newcommand{\dx}{\, \mathrm{d} x}
\newcommand{\ds}{\, \mathrm{d} s}
\newcommand{\foralls}{\ensuremath{\forall\,}}
\newcommand{\ls}{\lesssim}  
\newcommand{\triang}{\mathcal{T}}
\newcommand{\CG}{P}
\newcommand{\DG}{DG}
\newcommand{\R}{\mathbb{R}}
\newcommand{\T}{\ensuremath{\mathcal{T}}}
\begin{document}

\journalname{BIT}
\title{Accurate discretization of poroelasticity without Darcy stability}
\subtitle{Stokes-Biot stability revisited}
\titlerunning{Stokes-Biot stability revisited}


\author{Kent-Andre Mardal \and Marie E. Rognes \and Travis B. Thompson$^\dagger$}
\authorrunning{K.-A.~Mardal, M.E.~Rognes and T.~Thompson}

\institute{K.-A. Mardal \at Dept.~of Mathematics, Univ. of Oslo and Simula Research Laboratory, Norway. %
\and M.E.~Rognes \at ~Dept.~of~Sci.~Comput.~and Num.~Anal.~~Simula Research Laboratory, Norway. %
\and T.~Thompson \at ~Dept.~of~Sci.~Comput.~and Num.~Anal.~~Simula Research Laboratory, Norway and Mathematical Institute, Oxford University. \\
$\dagger$  Corresponding Author \email{thompsont@maths.ox.ac.uk}}

\maketitle 

\begin{abstract}
In this manuscript we focus on the question: what is the correct notion of 
Stokes-Biot stability?  Stokes-Biot stable discretizations have 
been introduced, independently by several authors, as a means of discretizing 
Biot's equations of poroelasticity; such schemes  
retain 
their stability and convergence properties, with respect to appropriately 
defined norms, in the context of a vanishing storage coefficient and a vanishing 
hydraulic conductivity.  The basic premise of a Stokes-Biot stable discretization 
is: one part Stokes stability and one part mixed Darcy stability.  In this manuscript 
we remark on the observation that the latter condition can be generalized to a 
wider class of discrete spaces.  In particular: a parameter-uniform inf-sup 
condition for a mixed Darcy sub-problem is not strictly necessary to retain the 
practical advantages currently enjoyed by the class of Stokes-Biot stable 
Euler-Galerkin discretization schemes.  
\keywords{Poroelasticity \and Biot's Equations \and Mixed Method \and Darcy Stability \and Stokes-Biot Stability}
\subclass{65M60 \and 74S05 \and 76S05} 
\end{abstract}

\section{Introduction}
\label{sec:introduction}

In this note, we consider a three-field formulation of the
time-dependent Biot equations %
describing flow through an isotropic, porous and linearly elastic medium, 
reading as: find the elastic displacement $u$, the Darcy flux $z$ and the
  (negative) fluid pressure $p$ such that
\begin{subequations}
  \begin{eqnarray}
    \label{eq:biot:a}
    - \div \sigma(u) - \alpha\grad p &=& f, \\        
    \label{eq:biot:b}
    \frac{1}{\kappa} z - \grad p &=& g, \\  
    \label{eq:biot:c} 
    \alpha\div \partial_t u + \div z - c_0 \, \partial_t p &=& s, 
  \end{eqnarray}
  \label{eq:biot}%
\end{subequations}
for a given body force $f$, source $s$, and given $g$ (typically $g =
0$) over a domain $\Omega \subset \R^d$ ($d = 1, 2, 3$). The expression 
$\sigma(u)$ denotes the isotropic elastic stress tensor, 
$\sigma(u) = \mu \eps(u) + \lambda \trace \eps(u)$, where $\trace$ is the 
matrix trace. The material parameters are the elastic Lam\'{e} parameters 
$\mu$ and $\lambda$, the Biot-Willis coefficient $\alpha$, the storage 
coefficient $c_0 \geq 0$ and the hydraulic conductivity 
$\kappa = K/\mu_f > 0$, in which $K$ is the material permeability, 
and $\mu_f$ is the fluid viscosity. Moreover, $\eps$ denotes the (row-wise)
symmetric gradient, $\div$ is the divergence, $\grad$ is the gradient,
and $\partial_t$ denotes the (continuous) time-derivative.

The three field formulation \eqref{eq:biot:a}-\eqref{eq:biot:c} combines one 
scalar, time-dependent partial differential equation and two, stationary, 
vector partial differential equations.  This combination of time-dependent and 
time-independent equations can lead to non-trivial issues when considering 
discretizations of the time derivative; as a result: 
several splitting scheme approaches have been proposed~\cite{brun2019monolithic,%
girault2019priori,hong2020parameter,%
kumar2020conservative,storvik2019optimization}.    
In this manuscript we will focus on a monolithic approach, namely a 
straightforward backward Euler scheme, where all unknowns are solved for 
simultaneously. In the case of monolithic time discretization schemes:  
robustness with respect to material parameters in spatial
discretizations of \eqref{eq:biot} is a central concern and has been the topic 
of several recent investigations; %
c.f.~ e.g.~\cite{hong2017parameter,hu2017nonconforming,kraus2020parameter,rodrigo2018new}. A
notable difficulty, both practically and theoretically,
is that the parameter $\lambda$ may be very large, while $\kappa$ may be very 
small. The former corresponds to the (nearly) incompressible regime, while the 
latter corresponds to the (nearly) impermeable regime. Special care is
required in the formulation and analysis of discretizations
of~\eqref{eq:biot} to retain stability and convergence within these
parameter ranges.

Thus far, authors have analyzed mixed discretizations of \eqref{eq:biot} in the 
nearly-incompressible, and nearly-impermeable parameter regimes separately.  
For instance, a mixed discretization based on a total-pressure formulation  
\cite{kumar2020conservative,lee2018,lee2017parameter,lee2019,oyarzua2016locking}
has been well-studied and addresses the case of $\lambda\rightarrow \infty$.   
In the context of vanishingly small hydraulic conductivity, the concept of a 
\emph{Stokes-Biot stable} discretization has emerged 
\cite{hong2017parameter,lee2018,lotfian2018,rodrigo2018new} as a guide for the design of 
discrete schemes that retain their convergence properties as 
$\kappa \rightarrow 0$.

\begin{remark}
It is worth noting that the term \emph{Stokes-Biot stability} refers, in the 
contemporary literature, to a particular type of dual inf-sup condition.  It 
does not allude to an interface problem; that is, one should not confuse this 
term with that of a `\emph{Stokes-Darcy problem}', which refers to coupled 
Stokes and Darcy flow at an interface. 
\end{remark}
 
\subsection{An intuition for the Stokes-Biot stability condition} 

To motivate an intuitive view on the current notion of Stokes-Biot stability, 
we begin by considering a three-field variational formulation of a related 
system of (time-independent) equations: find $u \in U$, $z \in W$, and $p \in Q$ such that
\begin{subequations}
  \begin{alignat}{3}
    \label{eq:biot:steady:a}
    \inner{\sigma(u)}{\eps(v)} + \inner{\div v}{p} &= \inner{f}{v}
    &&\quad \foralls v \in U, \\
    \label{eq:biot:steady:b}
    \tau \kappa^{-1} \inner{z}{w} + \tau \inner{\div w}{p} &= \tau \inner{g}{w}
    &&\quad \foralls w \in W,\\  
    \label{eq:biot:steady:c}
    \inner{\div u}{q} +
    \inner{\tau \div z}{q} - \inner{c_0 p}{q} &= \inner{\tau s + \div \bar{u} - c_0 \bar{p}}{q},
    &&\quad \foralls q \in Q, 
  \end{alignat}
  \label{eq:biot:steady}%
\end{subequations}
for given $f, g, s, \bar{u}, \bar{p}$ and with $\inner{\cdot}{\cdot}$
denoting the standard $L^2$-inner product over the domain
$\Omega$. The continuous formulation~\eqref{eq:biot:steady} is representative 
of the equations resulting from an implicit Euler time discretization of~\eqref{eq:biot} 
with time step $\tau > 0$ and a prescribed set of homogeneous boundary conditions.  %
%
%
To continue, set $\tau = 1$; in this case we refer %
to~\eqref{eq:biot:steady} as (a mixed variational formulation of) a steady 
equation of Biot type; that is, the left-hand side is free of any time 
derivatives.  %
The system \eqref{eq:biot:steady} forms a generalized saddle-point
system which can be informally related \cite{lotfian2018} to a stand-alone 
Stokes-like, and stand-alone mixed-Darcy system.  For the former, multiply 
\eqref{eq:biot:steady:b} by $\kappa$, take $\kappa = c_0 = 0$ and assume 
$s = \div \bar{u} = 0$.  If $(u,z,p)$ solves~\eqref{eq:biot:steady} under 
these conditions, then $z = 0$ almost everywhere and
%
%
and~\eqref{eq:biot:steady} reduces to: find $u \in U$ and $q \in Q$
such that:
\begin{subequations}
  \begin{alignat}{3}
    \inner{\nu \eps(u)}{\eps(v)} + \inner{\div v}{p} &= \inner{f}{v}, \\
    \inner{\div u}{q} &= \inner{\div \bar{u}}{q} = 0, 
  \end{alignat}
  \label{eq:stokes}%
\end{subequations}
for all $v \in U$ and $q \in Q$, with $\nu = 2 \mu$. On
the other hand, if $c_0 = 0$ and the solution $(u,z,p)$ to \eqref{eq:biot:steady} 
satisfies $\div u = 0$ then $(z,p)$ solve the mixed Darcy problem: find 
$z\in W$ and $p \in Q$ such that
\begin{subequations}
  \begin{alignat}{3}
    \label{eq:darcy:a}
    \inner{\kappa^{-1} z}{w} + \inner{\div w}{p} &= \inner{g}{w}, \\
    \label{eq:darcy:b}
    \inner{\div z}{q} &= \inner{\tilde{s}}{q}, 
  \end{alignat}
  \label{eq:darcy}%
\end{subequations}
for all $w \in W$ and $q \in Q$ for given $g, \tilde{s}$. These observations hint
at a close relationship between the Stokes equations, Darcy equations and
the (steady) Biot-like system \eqref{eq:biot:steady}.
%
With this background: the Stokes-Biot stability concept 
\cite{hong2017parameter,lee2018,lotfian2018,rodrigo2018new} introduces two
conditions for finite element discretizations $U_h \times W_h \times
Q_h$ of~\eqref{eq:biot} or~\eqref{eq:biot:steady}:
\begin{enumerate}[label=(\roman*)]
\item
  the displacement-pressure pairing $U_h \times Q_h$ is a stable pair,
  in the sense of Babu\v ska-Brezzi~\cite{brezzi1974existence}), for the
  incompressible Stokes equations~\eqref{eq:stokes},
\item
  the flux-pressure pairing $W_h \times Q_h$ is a stable pair for the
  mixed Darcy problem~\eqref{eq:darcy}.
\end{enumerate}

\subsection{The Darcy assumption of Stokes-Biot stability}

Stokes-Biot stable discrete schemes should retain their convergence properties 
even when $\kappa \rightarrow 0$.  Indeed, a-priori error estimates, in 
appropriate parameter-dependent norms, have been advanced for both 
non-conforming \cite{hong2017parameter,kraus2020parameter,lee2018} and conforming 
\cite{rodrigo2018new} discretizations of~\eqref{eq:biot} or~\eqref{eq:biot:steady} 
satisfying the Stokes-Biot conditions (i) and (ii).  Consider a numerical test 
with two closely-related choices of discrete spaces;  the finite element pairings 
$\CG_2^d\times RT_0 \times \DG_0$ (product space of continuous piecewise quadratic 
vector fields, lowest order Raviart-Thomas elements and piecewise constants) and 
$\CG_1^d \times RT_0 \times \DG_0$. The former pairing satisfies conditions (i) 
and (ii) above (for given $\kappa > 0$), and is observed to converge even for 
$\kappa \ll 1$, see e.g.~\cite{rodrigo2018new} or Table~\ref{tab:p2rt0p0}. 
The latter pairing, which violates condition (i), can easily fail to
converge when $\kappa$ is sufficiently small (c.f.~ \cite[Table 2.1]{rodrigo2018new} or 
\cite[Section 6]{lotfian2018}).  This numerical observation demonstrates that 
condition (i), Stokes stability, is indeed an integral player in discretizations 
of \eqref{eq:biot} that retain their convergence behaviour as $\kappa\rightarrow 0$. 
\begin{table}
  \caption{Relative approximation errors for the displacement
    $\|\tilde u - u_h\|_1/\|\tilde u\|_1$ (top three rows in each
    table), pressure $\| \tilde p - p_h \|/\| \tilde p \|$ (middle
    three rows) and flux $\|\tilde z - z_h\|_{\div}/\|\tilde
    z\|_{\div}$ (bottom three rows) for varying $\kappa$ on a series
    of uniform meshes $\triang_h$ with mesh size $h$. The last column
    `Rate' denotes the order of convergence using for the last two
    values in each row. The exact solutions $\tilde u, \tilde p,
    \tilde z$, defined in Section~\ref{sec:numerics}, were represented
    by continuous piecewise cubic interpolants in the error
    computations. Similar results were obtained for $\kappa = 10^{-2},
    10^{-6}, 10^{-10}$ (data not shown). $c_0 = 0$. (A): $U_h \times
    W_h \times Q_h = \CG_2^d(\triang_h) \times RT_0(\triang_h) \times
    \DG_0(\triang_h)$. (B): $U_h \times W_h \times Q_h =
    \CG_2^d(\triang_h) \times \CG_1^d(\triang_h) \times
    \DG_0(\triang_h)$. }
  \begin{subtable}{1.0\textwidth}
    \begin{center}
      \caption{$\CG_2^d \times RT_0 \times \DG_0$}
      \label{tab:p2rt0p0}
      \begin{tabular}{l|lllll|c}
        \toprule
        \diagbox[]{$\kappa$}{$h$} & 1/8 & 1/16 & 1/32 & 1/64 & 1/128 & Rate \\
        \midrule
        $10^{0}$  & \num{1.64e-01} & \num{4.45e-02} & \num{1.13e-02} & \num{2.84e-03} & \num{7.11e-04} & $2.0$ \\
        $10^{-4}$ & \num{1.64e-01} & \num{4.45e-02} & \num{1.13e-02} & \num{2.84e-03} & \num{7.11e-04} & $2.0$ \\
        $10^{-8}$ &  \num{1.64e-01} & \num{4.45e-02} & \num{1.13e-02} & \num{2.84e-03} & \num{7.11e-04} & $2.0$ \\
        \midrule
        $10^{0}$ & \num{4.00e-01} & \num{1.03e-01} & \num{5.05e-02} & \num{2.53e-02} & \num{1.26e-02} & $1.0$\\
        $10^{-4}$ & \num{2.08e02} & \num{1.62e01} & \num{1.13e00} & \num{7.61e-02} & \num{1.34e-02} & $2.5$ \\
        $10^{-8}$ & \num{2.50e02} & \num{2.41e01} & \num{2.52e00} & \num{2.81e-01} & \num{3.51e-02} & $3.0$ \\
        \midrule
        $10^{0}$ & \num{1.30e00} & \num{1.76e-01} & \num{6.51e-02} & \num{3.18e-02} & \num{1.59e-02} & $1.0$ \\
        $10^{-4}$ & \num{3.06e03} & \num{6.76e02} & \num{1.19e02} & \num{1.65e01} & \num{2.11e00} & $3.0$ \\
        $10^{-8}$ & \num{4.51e03} & \num{2.27e03} & \num{1.31e03} & \num{7.09e02} & \num{3.61e02} & $1.0$ \\
        \bottomrule
      \end{tabular}
    \end{center}
  \end{subtable}
  \begin{subtable}{1.0\textwidth}
    \begin{center}
      \vspace{1em}
      \caption{$\CG_2^d \times \CG_1^d \times \DG_0$}
      \label{tab:p2p1p0}
      \begin{tabular}{l|lllll|c}
        \toprule
        \diagbox[]{$\kappa$}{$h$} & 1/8 & 1/16 & 1/32 & 1/64 & 1/128 & Rate \\
        \midrule
        $10^{0}$  & \num{1.64e-01} & \num{4.45e-02} & \num{1.13e-02} & \num{2.84e-03} & \num{7.12e-04} & $2.0$ \\
        $10^{-4}$ & \num{1.64e-01} & \num{4.45e-02} & \num{1.13e-02} & \num{2.84e-03} &  \num{7.11e-04} & $2.0$ \\
        $10^{-8}$ & \num{1.64e-01} & \num{4.45e-02} & \num{1.13e-02} & \num{2.84e-03} & \num{7.11e-04} & $2.0$ \\ 
        \midrule
        $10^{0}$ & \num{1.51e02} & \num{1.95e01} & \num{5.50e00} & \num{2.65e00} & \num{1.34e00} & $1.0$ \\
        $10^{-4}$ & \num{2.44e02} & \num{2.33e01} & \num{2.42e00} & \num{2.75e-01} & \num{3.52e-02} & $3.0$ \\
        $10^{-8}$ & \num{2.50e02} & \num{2.41e01} & \num{2.52e00} & \num{2.82e-01} &  \num{3.56e-02} & $3.5$ \\
        \midrule 
        $10^{0}$ & \num{1.12e00} & \num{1.71e-01} & \num{7.28e-02} & \num{3.62e-02} & \num{1.81e-02} & $1.0$ \\
        $10^{-4}$ & \num{4.56e02} & \num{8.98e01} & \num{1.25e01} & \num{1.31e00} & \num{1.17e-01} & $3.5$ \\
        $10^{-8}$ & \num{4.87e02} & \num{1.24e02} & \num{2.85e01} & \num{6.59e00} & \num{1.57e00} & $2.1$ \\
        \bottomrule
      \end{tabular}
    \end{center}
  \end{subtable}
  \vspace{1em}
\end{table}

The importance of the Stokes stability condition is not surprising from a 
theoretical perspective.  In the early Stokes-Biot literature, condition (i) plays 
a formative role \cite{rodrigo2018new} in showing that Euler-Galerkin discretizations 
of \eqref{eq:biot} remain inf-sup stable as $\kappa\rightarrow 0$.  Conversely,  
the Darcy stability condition (ii) is used to construct a projection that 
facilitates an a-priori analysis; the condition is not used in the stability 
argument.  This raises the question: is condition (ii) necessary to %
%
%
%
guarantee convergence as $\kappa\rightarrow 0$?  This question is important; the 
Darcy stability condition can easily fail to hold uniformly in $\kappa \ll 1$, 
thereby placing the previous analytic projection technique on questionable grounds.  
This observation was implicitly noted by other authors; c.f.~for instance \cite[Rmk.~5]{hong2017parameter}. 
More precisely, the continuous mixed Darcy problem~\eqref{eq:darcy} does not 
satisfy the Babuska-Brezzi conditions~\cite{brezzi1974existence} with bounds
independent of $0 < \kappa \ll 1$ in the standard $H(\div) \times L^2$
norm. To compensate, permeability-weighted flux and pressure norms,
such as e.g.~$\kappa^{-1/2} H(\div)\times \kappa^{1/2} L^2$, have been
suggested as viable alternatives~\cite{hong2017parameter}. However,
resorting to a permeability-weighted pressure space is not entirely
satisfactory; the relation between~\eqref{eq:biot:steady} and the
Stokes equations~\eqref{eq:stokes}, resulting from $\kappa \rightarrow 0$, 
points at $p \in L^2$ rather than $p \in \kappa^{1/2} L^2$. 

Moreover,  numerical experiments demonstrate convergence of the pressure 
in the $L^2$-norm even for diminishing $\kappa$, see
e.g.~Table~\ref{tab:p2rt0p0} for the pairing $\CG_2^d \times RT_0
\times \DG_0$. Conversely, consider the pairing $\CG_2^d \times
\CG_1^d \times \DG_0$ which violates the Darcy condition (ii), for any
$\kappa > 0$, and thus does not satisfy the Stokes-Biot stability
conditions. However, numerical experiments with this pairing,
see~Table~\ref{tab:p2p1p0}, show the hallmark of Stokes-Biot stable 
schemes.  That is, they appear stable, with the displacement and pressure errors 
converging at comparable rates as for $\CG_2^d \times RT_0
\times \DG_0$, for small $\kappa$; this behaviour even holds when $c_0 = 0$. 
These observations call into question the precise role of the Darcy
stability assumption in conforming mixed finite element
discretizations of \eqref{eq:biot} or~\eqref{eq:biot:steady}.

\subsection{Stokes-Biot stability revisited}

In this manuscript, we advance a theoretical point.  Namely, that a full 
Darcy inf-sup assumption is not necessary and can be relaxed; at least in the 
case of conformal Euler-Galerkin discretizations of~\eqref{eq:biot} 
or~\eqref{eq:biot:steady}.  %
%
%
Instead, we will see is that the following two assumptions are key:
\begin{enumerate}[label=(\Roman*)]
\item
  the displacement-pressure pairing $U_h \times Q_h$ is a stable pair
  for the incompressible Stokes equations~\eqref{eq:stokes}; and that
\item
  the inclusion $\div W_h \subseteq Q_h$ holds.
\end{enumerate}
We return to, and formalize, these \emph{minimal Stokes-Biot stability conditions} 
in Section~\ref{sec:stokes-biot-stability-criteria}.  In practice, the class of 
minimally Stokes-Biot stable discretizations are a superset of Stokes-Biot stable 
discretizations; one could then naturally consider dropping the distinction and, 
instead, viewing Stokes-Biot stability from this alternative point of view. 
We will show that the relaxed conditions produce schemes that retain their stability 
and convergence properties, in appropriate norms, as $\kappa \rightarrow 0$; motivated 
by the literature in applied porous-media modeling, we also note that this  
holds true for applications where $0 \leq c_0 < 1$ is chosen independently of 
other parameters.

Our primary purpose in this manuscript is theoretical in nature.  After introducing 
the relaxed conditions, we comment on the inf-sup stability and advance an 
a-priori analysis that does not employ a Galerkin projection technique; thus 
avoiding either an implicit dependence on $\kappa^{-1}$ in any projection estimates  
or the problematic question of uniform inf-sup Darcy stability as 
$\kappa \rightarrow 0$.  Unlike some previous endeavors of convergence estimates, 
we will conduct our estimates in the full norm used \cite{hong2017parameter,rodrigo2018new} 
for the inf-sup stability.  In particular, we introduce the norms in which 
Euler-Galerkin schemes, satisfying the relaxed conditions, are well posed and 
we show that the corresponding a priori error convergence rates: hold
in the limit as $\kappa \rightarrow 0$; and coincide with canonically
expected rates for well known mixed three-field finite element
paradigms (e.g.~first order for discretizations using linear or
Raviart--Thomas type flux approximations, etc).  Our objective, in clarifying 
these nuanced issues, is to establish a more consistent theory of Stokes-Biot 
stable schemes and to demonstrate an alternative, but standard, approach for their 
convergence analysis; such a view may also lead to downstream advances in the 
design of more efficient numerical schemes.  %
The remainder of this manuscript is organized as follows: Section 
\ref{sec:prelim} describes basic spaces and notation that 
will be used throughout; Section \ref{sec:stokes-biot-stability-criteria} 
overviews the current view of Stokes-Biot stability 
\cite{hong2017parameter,lee2018,rodrigo2018new,lotfian2018}; Section 
\ref{sec:euler-galerkin:well-posed} introduces a slight relaxation on the 
Stokes-Biot stable conditions and recalls a well-posedness argument for 
Euler-Galerkin discrete schemes; Section \ref{sec:a-priori:full-model} 
is aimed at a priori estimates for discretizations satisfying the relaxed 
conditions; finally, Section \ref{sec:numerics} is a numerical example 
demonstrating the retention of convergence behaviour as $\kappa \rightarrow 0$.  

\begin{remark}
In this manuscript, we are concerned with discretizations that retain their 
stability and convergence as $\kappa \rightarrow 0$.  The case of 
$\lambda \rightarrow \infty$ has been investigated separately 
\cite{kumar2020conservative,lee2018,lee2017parameter,lee2019,oyarzua2016locking} 
by introducing a total pressure, $\hat{p} = \lambda \nabla \cdot u - p$, to achieve 
robustness with respect to $\lambda$ when $\kappa\approx 1$ is assumed.  This 
view is similar to Herrmann's method~\cite{herrmann1965elasticity}, where 
a `solid pressure' term $p_s = \lambda\nabla\cdot u$, for elasticity systems 
in primal form with $\mu \ll \lambda$.  One may wonder if these 
methods can be brought together in a conformal setting.  This has not yet been 
investigated in the literature, and this is not the question we investigate in 
this manuscript; our current focus is to further the understanding of Stokes-Biot 
stable discretizations.  
\end{remark}

\section{Notation and preliminaries}
\label{sec:prelim}

\subsection{Sobolev spaces and norms}
Let $\Omega \subset \R^d$ for $d = 1, 2, 3$ be an open and bounded
domain with piecewise $C^2$ boundary
\cite{lipnikov-2002,showalter-2000,zenisek1984}.  We will consider
discretizations of $\Omega$ by simplicial complexes of order $d$.  All
triangulations, $\mathcal{T}_h$ of $\Omega$, will be assumed to be
shape regular with the maximal element diameter, also referred to as
the mesh resolution or mesh size, of $\mathcal{T}_h$ denoted by $h$.

We let $L^2(\Omega; \R^d)$, $H(\div, \Omega)$ and $H^1(\Omega; \R^d)$ denote the 
standard Sobolev spaces of square-integrable fields over $\Omega$, fields with
square-integrable divergence, and fields with square-integrable
gradient, respectively, and define the associated standard norms
\begin{align*}
  &\norm{f}^2 = \inner{f}{f}, \\
  &\norw{f}{1}^2 = \inner{f}{f}_1 = \inner{f}{f} + \inner{\grad f}{\grad f}, \\
  &\norw{f}{\div}^2 = %
   \inner{f}{f}_{\div} = \inner{f}{f} + \inner{\div f}{\div f}. 
\end{align*}
with $\inner{\cdot}{\cdot}_{\Omega}$ denoting the standard
$L^2(\Omega)$-inner product.  We will frequently drop the arguments
$\Omega$ and $\R^d$ from the notation when the meaning is clear from
the context.  %
%
%
The notation $H^1_{\Gamma}(\Omega)$ represents those functions in
$H^1$ with zero trace on $\Gamma \subseteq \partial
\Omega$. Similarly, $H_{\Gamma}(\div, \Omega)$ denotes fields in
$H(\div, \Omega)$ with zero (normal) trace on $\Gamma \subseteq
\partial \Omega$ in the appropriate
sense~\cite{boffi-brezzi-fortin2013}. We also define the standard
space of square-integrable functions with zero average:
\begin{equation*}
  L^2_0(\Omega) = %
  \left\{ p\in L^2(\Omega) \quad | \quad \int_{\Omega} p \dx = 0 \right\}.
\end{equation*}

We will also use parameter-weighted norms. For a Banach space $X$ and
real parameter $\alpha > 0$, the space $\alpha X$ signifies $X$
equipped with the $\alpha$-weighted norm $\norw{f}{\alpha X} = \alpha
\norw{f}{X}$. Finally, for a coercive and continuous bilinear form $a
: V \times V \rightarrow \R$, we will also write
\begin{equation*}
  \| v \|_{a}^2 = a(v, v).
\end{equation*}

\subsection{Intersections and sums of Hilbert spaces}

Let $X \subset Z$ and $Y \subset Z$ be two Hilbert spaces with a
common ambient Hilbert space $Z$. The intersection space, denoted $X
\cap Y$, is a Hilbert space with norm
\begin{equation*}   
  \|x\|_{X \cap Y}^2 = \|x\|^2_X + \|x\|^2_Y .
\end{equation*}
For instance, to illustrate our notation, the norm on the intersection
space $\kappa^{-1/2} L^2 \cap H(\div)$ is given by
\begin{equation*}
  \| v \|_{\kappa^{-1/2} L^2 \cap H(\div)}^2 = %
  \| v \|_{\kappa^{-1/2} L^2}^2 + \| v \|_{H(\div)}^2 = %
  \kappa^{-1} \| v \|_{L^2}^2 + \| v \|_{H(\div)}^2 .
\end{equation*}

The sum space $X+Y$ is the set $\left\{ z=x+y \,\,|\,\, x\in X,\, y\in
Y \right\}$ equipped with the norm
\begin{equation*}   
  \|z\|^2_{X + Y} = \inf_{\substack{z=x+y \\ x \in X, y \in Y}}\|x\|_X^2 + \|y\|_Y^2, 
\end{equation*}
and is also a Hilbert space. 
See e.g.~\cite[Ch. 2]{lofstrom1976} for a further discussion of sum and intersection 
spaces.  

\subsection{Operators}
For a given time step size $\tau$, times $t^{m-1}$ and $t^m$ and
fields $u^m \approx u(t^m)$ and $u^{m-1} \approx u(t^{m-1})$, we will
make use of a discrete derivative notation
\begin{equation}
  \label{eq:disc-deriv-notation}
  \partial_{\tau} u^m = \frac{u^m - u^{m-1}}{\tau}.
\end{equation}

\subsection{Finite element spaces}

Now, suppose that $\Omega\subset \mathbb{R}^d$ is a polygonal and let
$C^k(\Omega)$ denote the space of $k$-continuously differentiable
functions defined on $\Omega$. Let $D \subseteq \Omega$ and let
$P^k(D) \subset C^{\infty}(D)$ denote the set of polynomials of total
degree $k$ defined on $D$.  Let $\T_h$ be a simplicial triangulation
of $\Omega$ and let $T\in\T_h$ be any simplex; we denote the
restriction of a function $f$ to $T\in \T_h$ by $f_T$.  The notation
for the Lagrange elements of order $k$ used here is then
\begin{equation}\label{eqn:pk-spaces-on-elements}
  \CG_k(\T_h) =
  \left \{ f \in C^0(\Omega) \quad \vert \quad
  f_{T} \in P^k(T), \quad \foralls T \in \T_h \right \}.
\end{equation}
The notation $\CG_k^d(\T_h)$ will be used to represent the $d$-dimensional (vector) 
Lagrange spaces in $\mathbb{R}^d$.  The discontinuous Galerkin spaces of order 
$k$ relax the overall continuity requirement of the Lagrange finite element 
spaces; 
they are defined by 
\begin{equation}\label{eqn:dg-spaces-on-mesh}
\DG_k(\T_h) = \left\{ f \in L^2(\Omega) \quad \vert \quad
f_{T} \in P^k(T) \quad \foralls T \in \T_h \right \}.
\end{equation} 
A comprehensive discussion on Lagrange and discontinuous Galerkin
elements and their interpolation properties can be found in
e.g.~\cite{GUERMONDERN} and \cite{riviereDG} respectively. We will
also make use of the Brezzi-Douglas-Marini and Raviart--Thomas finite
element spaces~\cite[Sec.~2.3]{boffi-brezzi-fortin2013}. Throughout the
rest of the manuscript we use the notation $\CG_k$, $\CG_k^d$, $\DG_k$, $BDM_k$
and $RT_k$ in reference to the spaces defined above; that is, we drop
the additional mesh domain specification. 

\subsection{Boundary and initial conditions}
\label{sec:bcs}


General boundary conditions for \eqref{eq:biot} start by considering 
two distinct, non-overlapping partitions of the $d-1$ dimensional boundary 
$\partial\Omega$.  The first, corresponding to the displacement, is 
$\partial \Omega = \overline{\Gamma_c} \cup \overline{\Gamma_t}$ and the second, 
corresponding to the pressure, is denoted 
$\partial \Omega = \overline{\Gamma_p} \cup \overline{\Gamma_f}$; the non-overlapping 
condition means $\Gamma_c \cap \Gamma_t = \emptyset$ and $\Gamma_p \cap \Gamma_f = \emptyset$.   
The general form of the typical boundary conditions are then expressed as 
%
\begin{equation}
  \begin{array}{llcrr}
    u = 0, & \text{ on } \Gamma_c, & \text{ and } & z \cdot n =0, & \text{ on } \Gamma_f,\\
    p = 0, & \text{ on } \Gamma_p, & \text{ and } & \hat{\sigma}(u,p)\cdot n = 0 & \text{ on } \Gamma_t,
  \end{array}
  \label{eq:bcs:general}
\end{equation}
where $\hat{\sigma}(u,p) = \sigma(u) + p\,I_d$ and $I_d$ is the $d\times d$ identity 
matrix.  We will consider a simplification of the boundary conditions, above.  
The simplification that we will consider is that which was studied in the  
defining work on Stokes-Biot stable discretizations   
\cite{hong2017parameter,hu2017nonconforming,lipnikov-2002,rodrigo2018new}.  These 
conditions take $\Gamma_f = \Gamma_c$ and $\Gamma_p = \Gamma_t$ with the $d-1$ dimensional Lebesgue 
measure $|\Gamma_c| > 0$.  Thus we have  
\begin{equation}
  \begin{array}{llcrr}
    u = 0, & \text{ on } \Gamma_c, & \text{ and } & z \cdot n =0, & \text{ on } \Gamma_c,\\
    p = 0, & \text{ on } \Gamma_t, & \text{ and } & \sigma(u)\cdot n = 0 & \text{ on } \Gamma_t,
  \end{array}
  \label{eq:bcs}
\end{equation}
%
%
%
%
%
%
%
%
Let $\eta(x,t)$ denote the fluid content with equation 
\begin{equation*}
\eta(x,t) = c_0 p(x,t) +  \div u (x,t).
\end{equation*}
We follow \cite{showalter-2000} and remark: that under appropriate regularity 
assumptions on the sources and initial data, (i.e.~source data in 
$C^{\alpha}(0,T;(\Omega))$ where $\alpha$ is the Biot-Willis coefficient, 
boundary data in $C^{\alpha}(0,T;L^2(\partial \Omega))$, initial fluid content 
$\eta(x,0)\in L^2(\Omega)$, etc), then there exists a unique solution to 
\eqref{eq:biot} satisfying the boundary conditions \cite{showalter-2000,zenisek1984}.  

\begin{remark}
A full discussion on regularity details for the 
source, initial and boundary data can be found in \cite[Theorem~1]{zenisek1984}, and 
\cite[Sections~3 and 4]{showalter-2000}.  We also note that the boundary 
conditions \eqref{eq:bcs} reflect a restriction that may not be practical for 
many applications.  These boundary conditions coincide with those initially 
considered in the Stokes-Biot stability literature (e.g.~\cite{rodrigo2018new}) and 
allow the key ideas behind the Stokes-Biot (respectively, minimal Stokes-Biot) 
conditions, discussed in Section~\ref{sec:stokes-biot-stability-criteria} (respectively, 
Section~\ref{sec:euler-galerkin:well-posed}), to be discussed simply.  A  
discussion of more general conditions can be found in Section~\ref{sec:concluding-remarks},  
and in e.g.~\cite{hong2017parameter}.
\end{remark}



\subsection{Material parameters}

To facilitate the analysis here, we will assume that the material parameters of 
\eqref{eq:biot:a}-\eqref{eq:biot:c}, i.e.~$\mu$, $\lambda$, $\alpha$, $\kappa$, and $c_0$, 
are constant in space and time.  %
%
%
For simplicity and without loss of generality 
we set $\alpha = 1$.  This view can either be
interpreted literally or as having divided \eqref{eq:biot:a}-\eqref{eq:biot:c} 
through by $\alpha$ to obtain rescaled material parameters.  Moreover, one need 
not look far \cite{vardakis2019,vardakis2018,hong2017parameter,li2013,lotfian2018,riviere2019,youngriviere2014}  
to find applications where $\kappa$ is small, and the storage coefficient $c_0$ 
varies over a wide range of values in the presence of only modest choices of 
$\lambda$.  For instance, the literature contains examples of low hydraulic 
conductivities where both $\lambda$ and $c_0$ are approximately unity \cite{hong2017parameter}; 
in various soft-tissues, $\lambda \approx 10^2$ and $c_0 \approx 10^{-5}$ have been 
used \cite{vardakis2019,vardakis2018}, in addition to $\lambda \approx 10^1$ or 
$\lambda \approx 10^3$ with $c_0 \approx 10^{-10}$ \cite{li2013,riviere2019}, and 
even $c_0 = 0$ \cite{lotfian2018,youngriviere2014}.  This wide variation in 
$c_0$, while $\lambda$ remains modest, can be due to several reasons: an ad-hoc 
modeling assumption; to simplify numerical methods when storage coefficients are 
near the limits of computing precision; or due to the fact that, especially in 
biological applications, measurements for certain parameters may be unavailable 
and values are often estimated, chosen, or substituted from those, of similar 
biological regime, for which reasonable parameter estimates are available.

This manuscript is only concerned with Stokes-biot stable discretizations;  
these discretizations are designed to retain their stability and convergence 
properties in the presence of diminished hydraulic conductivity.  Given the wide 
variety of storage coefficients which appear, in the applied literature, in the 
presence of values for $\lambda \in [10^1,10^3]$ we take the view here that 
$c_0$ and $\lambda$ are independent parameters; this is not to assert that 
the linear poroelasticity theory does not imply that $\lambda\rightarrow\infty$ 
as $c_0 \rightarrow 0$.  %
%
Rather, we do this to make a secondary, strictly-numerical observation: that 
Stokes-Biot stable schemes, and the relaxation we propose herein, also retain 
their stability and convergence properties as $\kappa \rightarrow 0$ for every 
fixed choice of $0 \leq c_0 < 1$.  We will therefore assume that 
$0 < \kappa \leq 1$ and $0 \leq c_0 < 1$ are fixed, but otherwise arbitrary, 
constants. 

\begin{remark}
The bilinear forms defined in Section~\ref{sec:stokes-biot-stability-criteria} 
are parameter-dependent.  Thus, the arguments advanced in this manuscript 
may potentially be extended to parameters that vary in space or time, provided 
they satisfy suitable regularity requirements to justify the requiste 
manipulations.  As parameter, or data, regularity is not the focus on the 
current work, we do not take up this issue herein; we belay the topic and 
consider constant (i.e.~constant $\mu$, $\lambda$ and arbitrary but fixed 
$0<\kappa\leq1$, and $0\leq c_0 \leq1$) parameters. 
%
%
\end{remark}

\section{The Stokes-Biot stability conditions for conforming %
Euler-Galerkin schemes}
\label{sec:stokes-biot-stability-criteria}

Combining the nature of~\eqref{eq:biot} with the boundary
conditions~\eqref{eq:bcs}, we define the spaces
\begin{align}
  \label{eq:biot:spaces}
  U = H^1_{\Gamma_c}(\Omega), \quad
  W = H_{\Gamma_c}(\div, \Omega), \quad
  Q = L^2(\Omega).
\end{align}
We consider the following variational formulation of~\eqref{eq:biot} over
the time interval $(0, T]$: for a.e. $t \in (0, T]$, find the
    displacement $u$, flux $z$ and pressure $p$ such that $u(t) \in
    U$, $z(t) \in Z$ and $p(t) \in Q$ satisfy
\begin{subequations}
  \begin{alignat}{3}
    \label{eq:biot:var:a}   
    a(u, v) + b(v,p) &= \inner{f}{v} &&\quad v \in V, \\
    \label{eq:biot:var:b}  
    c(z, w) + b(w,p) &= \inner{g}{w} &&\quad w \in W, \\
    \label{eq:biot:var:c}
    b(\partial_t u, q) + b(z, q) - d(\partial_t p, q) &= \inner{s}{q} &&\quad q \in Q. 
  \end{alignat}
  \label{eq:biot:var}
\end{subequations}
The bilinear forms in~\eqref{eq:biot:var} are given by:
\begin{equation}
  \begin{split}
  &a(u, v) = \inner{\sigma(u)}{\eps(v)}, \quad
  b(u, q) = \inner{\div u}{q},  \\
  &c(z, w) = \inner{\kappa^{-1} z}{w}, \quad
  d(p, q) = \inner{c_0 p}{q}.
  \end{split}
  \label{eq:biot:forms}
\end{equation}
As noted in \cite{rodrigo2018new}: the existence and uniqueness of a
solution $(u,z,p)$ to \eqref{eq:biot:var}, with continuous dependence
on $f$, $g$ and $s$, has been established by previous authors
\cite{lipnikov-2002,showalter-2000,zenisek1984}.

\begin{remark}\label{rmk:clampedbd}
If Dirichlet conditions are imposed for the displacement on the entire
boundary and thus the pressure is only determined up to a constant
(i.e.~if $\Gamma_c = \partial \Omega$) we instead let $Q = L_0^2$.
\end{remark}

%

\subsection{An Euler-Galerkin discrete scheme}

Following \cite{rodrigo2018new} we consider Euler-Galerkin
discretizations; i.e.,~conforming finite element spaces in space and
an implicit Euler in time, of \eqref{eq:biot:var}. Let $0 = t_0 < t_1
< \ldots < t_N=T$ be a uniform partition of the time interval $[0,T]$.
The constant time step is then $\tau = \tau_m = t^m - t^{m-1}$. For
the function $f(t, x)$, evaluation at $t^m$ is denoted by $f^m =
f(t^m, x)$, and similarly for $g$ and $s$. We define conforming
discrete spaces
\begin{equation}
  U_h \subset U, \quad W_h \subset W, \quad Q_h \subset Q.
\end{equation}

The Euler-Galerkin discrete scheme of Biot's equations then reads as
follows: for each time iterate $m \in \left \{1, 2, \dots, N \right
\}$, given $f^m$, $g^m$, $s^m$, $\div u_h^{m-1}$, and, iff $c_0 > 0$,
$p_h^{m-1}$, we seek $(u_h^m, z_h^m, p_h^m) \in U_h\times W_h\times
Q_h$ such that
\begin{subequations}
  \begin{align}
    a(u_h^m, v) + b(v, p_h^m) &= \inner{f^m}{v}, \label{eq:biot:full-var-disc:a}  \\
    \tau c(z_h^m, w) + \tau b(w,p_h^m) &= \tau \inner{g^m}{w}, \label{eq:biot:full-var-disc:b} \\
    b(\partial_{\tau} u_h^m, q) + b(z_h^m, q) - d(\partial_{\tau} p_h^m, q) &= \inner{s^m}{q},  
    \label{eq:biot:full-var-disc:c}
  \end{align}
  \label{eq:biot:full-var-disc}
\end{subequations}
for all $v \in U_h$, $w \in W_h$ and $q \in Q_h$, and where we have
made use of the discrete derivative
notation~\eqref{eq:disc-deriv-notation}.  

\subsection{The Stokes-Biot stability conditions}

The Stokes-Biot stability conditions were introduced independently, in 
slightly different contexts, by 
several authors \cite{hong2017parameter,lee2018,lotfian2018,rodrigo2018new} and  
%
guide the selection of discrete spaces,
$U_h \times W_h \times Q_h$, for \eqref{eq:biot:full-var-disc}. We
recall a succinct statement of the (conforming) Stokes-Biot stability conditions, %
used in analogous forms by all original authors \cite{hong2017parameter,lee2018,rodrigo2018new}, 
here for posterity:
\begin{definition}[c.f.~{\cite[Defn.~3.1]{rodrigo2018new}}]
  \label{defn:original-stokes-biot-stability}
  The discrete spaces $U_h \subset U$, $W_h \subset W$ and $Q_h
  \subset Q$ are called a Stokes-Biot stable discretization if and
  only if the following conditions are satisfied:
  \begin{enumerate}[label=(\roman*)]
  \item The bilinear form $a$, as defined by~\eqref{eq:biot:forms}, is bounded and coercive on $U_h$;
  \item The pairing $(U_h, Q_h)$ is Stokes stable;
  \item The pairing $(W_h,Q_h)$ is Darcy (Poisson) stable.
  \end{enumerate} 
\end{definition}
\noindent We remark that \cite{hong2017parameter,lee2018}  were not 
conforming. More precisely, the
Stokes and Darcy stability assumptions of
Definition~\ref{defn:original-stokes-biot-stability} entail that the
relevant discrete spaces are stable in the (discrete) Babu\v%
ska-Brezzi sense~\cite{boffi-brezzi-fortin2013,brezzi1974existence}
for the discrete Stokes and Darcy problems, respectively. We will now
examine the Darcy stability condition more closely.

\subsection{The Darcy stability condition}
\label{subsec:stokes-biot-stability-criteria:darcy-stability-condition}
The discrete Darcy problem reads as: find $(z_h, p_h) \in W_h \times
Q_h$ such that~\eqref{eq:darcy} holds for all $w \in W_h$ and $q \in
Q_h$. Assume that $W \subseteq W$ and $Q_h \subset Q$ are equipped
with norms $\|\cdot\|_{W}$ and $\|\cdot\|_{Q}$, respectively. The
space $W_h \times Q_h$ is Darcy stable in the (discrete) Babu\v
ska-Brezzi sense if the discrete Babu\v ska-Brezzi conditions are
satisfied, in particular, if there exists constants $\alpha > 0$ and
$\beta > 0$, independent of $h$, such that
\begin{align}
  \label{eq:darcy:coercivity}
  &c(w, w) \geq \alpha \|w\|_{W}^2 \quad \foralls w \in \ker b = \{ w \in W_h\, | \, b(w, q) = 0 \, \foralls q \in Q_h\}, \\
  \label{eq:darcy:infsup}
  &\inf_{q \in Q_h} \sup_{w \in W_h} \frac{b(w, q)}{\| w \|_W \| q \|_Q} \geq \beta > 0,
\end{align}
with $b$ and $c$ as defined by~\eqref{eq:biot:forms}. It is also
assumed that $b$ and $c$ are continuous over $W \times Q$ and $W
\times W$ with respect to the relevant norms; i.e.~there exist
constants $C_b > 0$ and $C_c > 0$, independent of $h$, such that
\begin{equation}
  b(v, q) \leq C_b \| v \|_W \| q \|_Q, \quad
  c(v, w) \leq C_c \| v \|_W \| w \|_W,
\end{equation}
for all $v, w \in W$, $q \in Q$.

The assumption of discrete Darcy stability, and thus the existence of
solutions to the discrete Darcy problem, has been used to define
Galerkin projectors for use in the a-priori analysis of the Biot
equations~\eqref{eq:biot:full-var-disc}
(c.f.~for instance \cite[Sec.~4.2]{rodrigo2018new}). Given $z(t) \in W$ and $p(t) \in Q$
solving the continuous Biot equations~\eqref{eq:biot:var}, these
projectors $\Pi_{W_h} z(t)$ and $\Pi_{Q_h} p(t)$ solve the discrete
Darcy problem~\eqref{eq:darcy} for all $w \in W_h$, $q \in Q_h$ with
right-hand sides given by $\inner{g}{w} = c(z(t), w) + b(w, p(t))$ and
$\inner{s}{q} = b(z(t), q)$. For an a-priori analysis based on such a
Galerkin-projection approach to be optimal, including in the limit as
$\kappa \rightarrow 0$, the continuity constants $C_b, C_c$ and the
Babu\v ska-Brezzi stability constants $\alpha, \beta$ must be
independent of $0 < \kappa \leq 1$.

Attaining $\kappa$-independent continuity and stability constants is
non-trivial for the Darcy problem, and the norms that are selected for
$W$ and $Q$ play a vital role. For instance, the standard pairing
$H(\div) \times L^2$ with the natural norms is not appropriate as
e.g.~$c$ is not continuous with respect to the $H(\div)$ norm: the
continuity bound $C_c$ depends on $\kappa$. However, the following
pairings for $W \times Q$ are all meaningful for \eqref{eq:darcy} or
its dual $L^2 \times H^1$ formulation:
\begin{itemize}
\item[(A)] $\left ( \kappa^{-1/2} L^2 \cap H(\div) \right ) \times \left ( L^2 + \kappa^{1/2} H^1 \right )$      
\item[(B)] $\kappa^{-1/2} H(\div) \times \kappa^{1/2} L^2$       
\item[(C)] $\kappa^{-1/2} L^2 \times \kappa^{1/2} H^1$       
\end{itemize}

In particular, the inf-sup condition~\eqref{eq:darcy:infsup} holds
with inf-sup constant $\beta$ independent of $\kappa$ for each of
these pairings. We remark that $\|p\|_{L^2 + \kappa^{1/2} H^1} \le
\|p\|$ and $\|p\|_{\kappa^{1/2} L^2} \le \|p\|$ for $\kappa \le
1$. The $\kappa$-independent inf-sup condition for (A) was recently
shown in \cite{baerland2018uniform}, the inf-sup condition for (B)
follows directly by a scaling of the flux by $\kappa^{-1/2}$ and the
pressure by $\kappa^{1/2}$. Finally, the inf-sup condition of (C)
follows directly from Poincare's inequality with a similar scaling as
in (B). The boundedness of $b(z,p)$ can be established for 
each of the pairings above.  The pairing of (C) corresponds to the case of the 
$L^2\times H^1$ formulation of the mixed Darcy problem, 
i.e.~$b(z,p) = \inner{z}{\grad p}$ with $z\in W = L^2$ and $p\in Q=H^1$, but 
boundedness is proved in the same manner as for (B).  In the case of (B): 
applying Cauchy-Schwarz and the weighted norm definitions immediately gives 
\[
|b(z,p)|
\leq
\norm{\div z} \norm{p} \leq \norw{z}{H(\div)} \norm{p} = 
\left(\kappa^{-1/2}\norw{z}{H(\div)}\right)\left(\kappa^{1/2}\norm{p}\right).
\]
The case of (A) is complicated by the definition of the sum norm on
the pressure space $Q$, and a one-line argument is not possible
without additional context; see \cite{baerland2018uniform} for
details.

%
%

Options (A) and (B) above fit naturally with the variational
formulation of~\eqref{eq:biot:full-var-disc} and
spaces~\eqref{eq:biot:spaces}.  In the following, we suggest that a
natural norm for the Darcy flux is
\begin{equation} 
  \label{eq:W-norm}
  \Wnorm{z}^2 = \frac{\tau}{\kappa}\inner{z}{z} + \tau^2 \inner{\div z}{\div z}, 
\end{equation}
which is equivalent to the norm of the flux in (A) above for the
relevant range of $\kappa$ when $\tau > 0$. However, both options (A)
and (B) have disadvantages. For (B), the pressure norm (on $Q$)
becomes progressively weaker as $\kappa$ nears 0 while the norm of the
flux divergence (on $W$) is unnecessarily large compared with
e.g.~\eqref{eq:W-norm}. The primary drawback to using (A) is that the
pressure norm is implicitly defined. This fact means that an a-priori
analysis based on the method of projections is more complex to carry
out in practice; it is not clear that standard analytic techniques, e.g.~in 
\cite{hong2017parameter,hu2017nonconforming,lee2018,rodrigo2018new} among others, 
could be used directly when the norm of $L^2 + \kappa^{1/2} H^1$ is chosen for 
the pressure space.

We will argue instead that an a-priori analysis of
\eqref{eq:biot:full-var-disc} based on the use of a Galerkin
projection of the form \eqref{eq:darcy} is not necessary; thus
alleviating the need for an explicit uniform-in-$\kappa$ Darcy
stability condition on $(W_h,Q_h)$. Neither \eqref{eq:darcy:infsup}
nor the saddle-point stability of \eqref{eq:darcy} in general play a
role in the well-posedness of \eqref{eq:biot:full-var-disc}. Condition
(iii) of Definition~\ref{defn:original-stokes-biot-stability} will
thus be replaced by a less restrictive condition. An important
consequence of relaxing the uniform-in-$\kappa$ Darcy stability
hypothesis is that the standard $L^2$-norm on $Q$ can, and will, be
used.

\section{Minimal Stokes-Biot stability}
\label{sec:euler-galerkin:well-posed}

 
%
%

In this section we state the definition of minimal Stokes-Biot stability and 
recall a previous inf-sup condition in the spirit of the Banach-Ne\u{c}as-Babu\u{s}ka 
theorem.  In particular, the minimal Stokes-Biot stability conditions 
(c.f.~Definition~\ref{defn:minimal-stokes-biot-stability}) relinquish the Darcy 
stability assumption in favor of a containment condition.  In practice, this 
containment condition is satisfied for discrete flux-pressure pairings that are 
Darcy stable, though other discrete spaces satisfy this condition which are 
not stable pairings for the mixed Darcy problem.  %
%
%
%
%
%
Throughout this section we assume that $U$, $W$ and $Q$ are defined by
\eqref{eq:biot:spaces}. The norm on $U$ is taken to be the usual
$H^1(\Omega)$-norm $\| \cdot \|_1$, the norm on $Q$ is the standard
$L^2$-norm $\| \cdot \|$, while the norm $\Wnorm{\cdot}$ on $W$ is the
weighted norm defined by \eqref{eq:W-norm}. The norm \eqref{eq:W-norm}
was first introduced in \cite[Sec.~3.1]{hu2017nonconforming}. The
bilinear forms $a, b, c, d$ are as defined by~\eqref{eq:biot:forms}.

\subsection{Minimal Stokes-Biot conditions}

We now introduce our set of minimal Stokes-Biot stability
conditions. For clarity and completeness (rather than e.g.~brevity), we
include the precise stability conditions in the definition here. In
essence, between Definitions~\ref{defn:original-stokes-biot-stability}
and~\ref{defn:minimal-stokes-biot-stability}, only condition $(iii)$
changes.
\begin{definition}
  \label{defn:minimal-stokes-biot-stability}
  A family of conforming discrete spaces $\{ U_h \times W_h \times Q_h
  \}_h$ with $U_h \subset U$, $W_h \subset W$ and $Q_h \subset Q$ is
  called minimally Stokes-Biot stable if and only if
  \begin{enumerate}[label=(\roman*)]
  \item The bilinear form $a$ is continuous and coercive on $U_h
    \times U_h$; i.e.~there exists constants $C_a > 0$ and $\gamma_a >
    0$ independent of $h$ such that
    \begin{equation}
      \label{eq:a:bounds}
      a(u, u) \geq \gamma_a \norw{u}{1}^2,
      \quad a(u, v) \leq C_a \norw{u}{1} \norw{v}{1},
      \quad \foralls u, v \in U_h.
    \end{equation}
  \item The pairings $\{ U_h \times Q_h \}_h$ are Stokes stable in the
    discrete Babu\v ska-Brezzi %
    sense~\cite{braess2002finite,brezzi1974existence}; i.e.~in particular
    there exists an inf-sup constant $\beta_S > 0$ independent
    of $h$ such that
    \begin{equation}
      \label{eq:stokes:infsup}
      \inf_{q \in Q_h} \sup_{v \in U_h} %
      \frac{b(v, q)}{\norw{v}{1} \norm{q}} \geq \beta_S > 0.
    \end{equation}
  \item $\div W_h \subseteq Q_h$ for each $h$.
  \end{enumerate} 
\end{definition}

The classical flux-pressure pairings, e.g.~$RT_k \times DG_k$ or 
$BDM_{k+1}\times DG_k$ for $k=0,1,2,\dots$, satisfying 
Definition~\ref{defn:original-stokes-biot-stability}(iii) also satisfy the 
conditions of minimal Stokes-Biot stability; in particular 
Definition~\ref{defn:minimal-stokes-biot-stability}(iii).  However, the minimal 
Stokes-Biot condition also includes discretizations which are not encompassed 
by Definition~\ref{defn:original-stokes-biot-stability}.  For instance: 
flux-pressure pairings where the flux is taken from the space of continuous 
Lagrange polynomials can satisfy 
Definition~\ref{defn:minimal-stokes-biot-stability} while not satisfying 
Definition~\ref{defn:original-stokes-biot-stability}.   An illustration of this 
can be found in the family of discretizations where the displacement-pressure 
pairing are of Scott-Vogelius type; these either have 
the form $P^d_k \times RT_m \times DG_{k-1}$ or $P^d_k \times P^d_m \times DG_{k-1}$ 
where $k \geq 4$ and $0\leq m \leq k-1$.  The flux-pressure pairings 
$RT_m \times DG_{k-1}$, for $m < k-1$, and $P^d_m \times DG_{k-1}$, for $m\leq k-1$, 
are \textit{not} Darcy stable but \textit{do satisfy} the minimal %
Stokes-Biot stability containment condition of %
Definition~\ref{defn:minimal-stokes-biot-stability}(iii).  
%

A more pragmatic example 
is the discretization $P^d_2 \times RT_0 \times DG_0$.  This discretization is
both Stokes-Biot stable and minimally Stokes-Biot stable; of note is
that $P^d_2 \times P^d_1 \times DG_0$ is not Stokes-Biot stable but is
minimally Stokes-Biot stable.  The $P^d_2\times RT_0 \times DG_0$
discretization is a prototype for the minimal-dof displacement enrichment of 
a $P^d_1\times RT_0 \times DG_0$ approach studied in \cite{rodrigo2018new}.  %
%
%
%
The comparison between $P^d_2 \times RT_0 \times DG_0$ and 
$P^d_2 \times P^d_1 \times DG_0$ serves as a motivation for  
Definition~\ref{defn:minimal-stokes-biot-stability}, and will be studied in 
Section~\ref{sec:numerics}.  A further discussion of spaces that satisfy the 
minimal Stokes-Biot stability condition is given in 
Section~\ref{sec:concluding-remarks}.
%

\subsection{An inf-sup condition for minimal Stokes-Biot stable Euler-Galerkin schemes}
\label{sec:euler-galerkin:well-posed:subsec:proof}

The variational problem~\eqref{eq:biot:full-var-disc} can be shown to
satisfy a requirement of the Banach-Ne\u{c}as-Babu\u{s}ka theorem with respect 
to the weighted norm \eqref{eq:W-norm} and Definition~\ref{defn:minimal-stokes-biot-stability}.  
In fact, this result was proved in \cite{hu2017nonconforming}.

\begin{proposition}[Theorem 1, \cite{hu2017nonconforming}]\label{prop:relaxed-Stokes-Biot:stability:stab-prop} 
Let 
$\norm{(u_h,w_h,q_h)}_{UWQ}$ be defined by  
\[
\norm{(u_h,w_h,q_h)}_{UWQ} = \left( \norm{u_h}^2_1 + \Wnorm{w_h}^2 + \norm{q_h}^2 \right)^{1/2} 
\]
where $\Wnorm{w_h}$ is defined by \eqref{eq:W-norm}.  Define a composite 
bilinear form, on $U_h\times W_h \times Q_h$ and corresponding to \eqref{eq:biot:full-var-disc}, 
by the formula
\begin{align*}
\mathcal{B}(u_h,z_h,p_h;v_h,&r_h,q_h) = a(u_h,v_h) + b(v_h,p_h) + %
\tau~c(z_h,r_h) \\&+ \tau~b(r_h,p_h) + b(u_h,q_h) + %
\tau~b(z_h,q_h) - d(p_h,q_h)
\end{align*}
Suppose $U_h \times W_h \times Q_h$ satisfy the assumptions of 
Definition~\ref{defn:minimal-stokes-biot-stability}.  Then $\mathcal{B}$ is 
continuous and there exists a constant $\gamma > 0$, independent of 
$\kappa$ and $c_0$, such that 
\[
\sup_{(v_h,r_h,q_h)\in U_h\times W_h \times Q_h} \frac{\mathcal{B}(u_h,z_h,p_h;v_h,r_h,q_h)}{\norm{(v_h,r_h,q_h)}_{UWQ}} \geq \gamma \norm{(u_h,z_h,p_h)}_{UWQ} 
\]  
\end{proposition}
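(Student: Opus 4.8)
The plan is to establish the two ingredients of the Banach-Ne\u{c}as-Babu\u{s}ka framework separately: the boundedness of $\mathcal{B}$, and the stated inf-sup bound, the latter by exhibiting, for a given $(u_h,z_h,p_h)$, an explicit test triple realizing a fixed fraction of $\norm{(u_h,z_h,p_h)}_{UWQ}^2$ while having comparable $\norm{\cdot}_{UWQ}$-norm. Boundedness is routine: apply Cauchy-Schwarz to each of the seven terms of $\mathcal{B}$, reading off the weights from~\eqref{eq:W-norm}; e.g.~$\tau\,c(z_h,r_h)=\tfrac{\tau}{\kappa}\inner{z_h}{r_h}\le\Wnorm{z_h}\Wnorm{r_h}$, $\tau\,b(r_h,p_h)\le(\tau\norm{\div r_h})\norm{p_h}\le\Wnorm{r_h}\norm{p_h}$, $b(v_h,p_h)\le C\norw{v_h}{1}\norm{p_h}$, and $-d(p_h,q_h)\le c_0\norm{p_h}\norm{q_h}\le\norm{p_h}\norm{q_h}$ since $0\le c_0<1$. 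Summing gives a continuity constant that depends only on $C_a$ and $d$, hence is independent of $\kappa$ and $c_0$.

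For the inf-sup bound I would build the test triple by superposing three pieces. First, testing with $(u_h,z_h,-p_h)$ cancels all $b$-terms in pairs, so Definition~\ref{defn:minimal-stokes-biot-stability}(i) together with $d(p_h,p_h)\ge0$ gives
\[
\mathcal{B}(u_h,z_h,p_h;u_h,z_h,-p_h)=a(u_h,u_h)+\tau c(z_h,z_h)+d(p_h,p_h)\ \ge\ \gamma_a\norw{u_h}{1}^2+\tfrac{\tau}{\kappa}\norm{z_h}^2 .
\]
This controls $\norw{u_h}{1}$ and the $L^2$-part of $\Wnorm{z_h}$, but neither $\norm{p_h}$ (which is essential when $c_0$ is small or zero) nor the divergence part $\tau^2\norm{\div z_h}^2$ of $\Wnorm{z_h}$. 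To capture the pressure I would use the Stokes inf-sup~\eqref{eq:stokes:infsup}: pick $v_p\in U_h$ with $\norw{v_p}{1}=\norm{p_h}$ and $b(v_p,p_h)\ge\beta_S\norm{p_h}^2$, and add $\delta v_p$ to the displacement test slot; the term $\delta\,b(v_p,p_h)$ then dominates $\delta\lvert a(u_h,v_p)\rvert$ after a Young inequality, leaving only a term $\lesssim\delta\norw{u_h}{1}^2$. To capture the flux divergence I would invoke Definition~\ref{defn:minimal-stokes-biot-stability}(iii): because $\div z_h\in Q_h$, it is an admissible pressure test function, so I add $\eta\tau\,\div z_h$ to the pressure test slot; this produces $\eta\tau^2\,b(z_h,\div z_h)=\eta\tau^2\norm{\div z_h}^2$ along with stray terms $\eta\tau\inner{\div u_h}{\div z_h}$ and $-\eta\tau\,c_0\inner{p_h}{\div z_h}$, both Young'd against a fraction of $\eta\tau^2\norm{\div z_h}^2$ and against $\norw{u_h}{1}^2$, $\norm{p_h}^2$ (using $c_0<1$ for the latter).

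Superposing, I test with $(v_h,r_h,q_h)=(u_h+\delta v_p,\ z_h,\ -p_h+\eta\tau\,\div z_h)$; since $\mathcal{B}$ is linear in the test argument the three contributions add and the mixed $b$-terms still cancel, so after the Young inequalities one reaches an estimate of the form
\[
\mathcal{B}\ \ge\ \big(\gamma_a-c_1\delta-c_2\eta\big)\norw{u_h}{1}^2+\big(c_3\delta-c_4\eta\big)\norm{p_h}^2+\tfrac{\tau}{\kappa}\norm{z_h}^2+\tfrac{\eta}{2}\tau^2\norm{\div z_h}^2,
\]
with $c_1,\dots,c_4>0$ depending only on $\gamma_a,C_a,\beta_S,d$. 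Choosing $\delta$ small relative to $\gamma_a\beta_S/C_a^2$ and then $\eta$ small relative to $\delta\beta_S$ and $\gamma_a/d$ makes all coefficients positive and bounded below by a constant $c_\star$ independent of $\kappa$ and $c_0$; as $\Wnorm{z_h}^2=\tfrac{\tau}{\kappa}\norm{z_h}^2+\tau^2\norm{\div z_h}^2$, this yields $\mathcal{B}(u_h,z_h,p_h;v_h,r_h,q_h)\ge c_\star\norm{(u_h,z_h,p_h)}_{UWQ}^2$. Finally the triangle inequality with $\norw{v_p}{1}=\norm{p_h}$ and $\tau\norm{\div z_h}\le\Wnorm{z_h}$ gives $\norm{(v_h,r_h,q_h)}_{UWQ}\le C\norm{(u_h,z_h,p_h)}_{UWQ}$, $C$ depending only on $\delta,\eta$; dividing through proves the claim with $\gamma=c_\star/C$.

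The decisive step is the flux-divergence estimate: it is precisely there that the containment $\div W_h\subseteq Q_h$ of Definition~\ref{defn:minimal-stokes-biot-stability}(iii) takes over the role the Darcy inf-sup condition plays in the original argument --- without it, $\div z_h$ need not lie in $Q_h$ and cannot be used in~\eqref{eq:biot:full-var-disc:c}. The remaining difficulty is pure bookkeeping: ordering the choices of the small parameters $\delta$ and $\eta$ so that every absorbed term is dominated by constants that do not blow up as $\kappa\to0$ or $c_0\to0$. This works because the $\tfrac{\tau}{\kappa}\norm{z_h}^2$ term obtained from the first test function is never consumed, and every appearance of $c_0$ is discarded using either $c_0<1$ (continuity, Young) or $c_0\ge0$ (the term $d(p_h,p_h)$), so no lower bound on $c_0$ is ever invoked.
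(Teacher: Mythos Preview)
Your proof is correct and follows the standard strategy for such inf-sup bounds: superpose the diagonal test $(u_h,z_h,-p_h)$ with a Stokes-inf-sup correction $\delta v_p$ in the displacement slot and a $\eta\tau\,\div z_h$ correction in the pressure slot, the latter being exactly where Definition~\ref{defn:minimal-stokes-biot-stability}(iii) enters. The paper itself does not give an independent proof of this proposition but simply cites \cite[Theorem~1]{hu2017nonconforming}; your argument is precisely the one found there, and its structure is echoed in the paper's own proof of the subsequent Corollary (where the same $v_h+\delta y_h$ device appears), so your proposal is fully aligned with the intended approach.
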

\begin{proof}
The proof follows from the arguments in \cite[Theorem 1]{hu2017nonconforming}.
\end{proof}

\begin{remark}
Work by previous authors \cite{hong2017parameter,hu2017nonconforming} shows that 
the assumptions of Definition~\ref{defn:minimal-stokes-biot-stability} were 
nascent in the literature.  The proof \cite{hu2017nonconforming} of 
Proposition~\ref{prop:relaxed-Stokes-Biot:stability:stab-prop}
is independent of $0\leq c_0$, and does not invoke Darcy stability, but does, 
in fact, use condition (iii) of Definition 
\ref{defn:minimal-stokes-biot-stability}.  
%
%
%
%
%
In fact, another version of Proposition~\ref{prop:relaxed-Stokes-Biot:stability:stab-prop} 
was also proved, independently, in \cite[Theorem 3.2, Case I]{hong2017parameter}; the proof, 
once more, is independent of $c_0$ and does not assume that the divergence maps 
the flux space surjectively onto the pressure space (i.e.~Darcy stability).  A 
nice mention of the case $U = H_0^1$ and $Q = L_0^2$ can also be found therein.  
The arguments of \cite[Theorem 3.2, Case I]{hong2017parameter} follow similarly 
to  those of \cite[Theorem 2]{hu2017nonconforming}.  
\end{remark}

\begin{corollary}
Assume that the assumptions of Definition~\ref{defn:minimal-stokes-biot-stability}
hold; then \eqref{eq:biot:full-var-disc} is well posed.
\end{corollary}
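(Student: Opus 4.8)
The plan is to recast the Euler--Galerkin step \eqref{eq:biot:full-var-disc} as a single \emph{square} variational equation governed by the composite bilinear form $\mathcal{B}$ of Proposition~\ref{prop:relaxed-Stokes-Biot:stability:stab-prop}, and then to combine that proposition with the finite dimensionality of $U_h \times W_h \times Q_h$. Concretely, I would fix $m \in \{1, \dots, N\}$ and reformulate the step: multiplying \eqref{eq:biot:full-var-disc:c} by $\tau$ and using the discrete derivative notation \eqref{eq:disc-deriv-notation}, the term $\tau\,b(\partial_{\tau} u_h^m, q)$ becomes $b(u_h^m, q) - b(u_h^{m-1}, q)$ and $\tau\,d(\partial_{\tau} p_h^m, q)$ becomes $d(p_h^m, q) - d(p_h^{m-1}, q)$; adding the three resulting equations shows that \eqref{eq:biot:full-var-disc} is equivalent to: find $(u_h^m, z_h^m, p_h^m) \in U_h \times W_h \times Q_h$ with
\begin{equation*}
  \mathcal{B}(u_h^m, z_h^m, p_h^m; v_h, r_h, q_h) = F^m(v_h, r_h, q_h)
  \quad \foralls (v_h, r_h, q_h) \in U_h \times W_h \times Q_h,
\end{equation*}
where $F^m(v_h, r_h, q_h) = \inner{f^m}{v_h} + \tau\inner{g^m}{r_h} + \tau\inner{s^m}{q_h} + b(u_h^{m-1}, q_h) - d(p_h^{m-1}, q_h)$, the last term being absent when $c_0 = 0$. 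The functional $F^m$ is linear, and for given data $f^m, g^m, s^m$ and fixed previous iterates $u_h^{m-1}, p_h^{m-1}$ it is bounded on $U_h \times W_h \times Q_h$: a Cauchy--Schwarz estimate yields $|F^m(v_h, r_h, q_h)| \le C^m \norm{(v_h, r_h, q_h)}_{UWQ}$, with $C^m$ depending on the data and on $\tau, \kappa, c_0$.

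Next I would invoke Proposition~\ref{prop:relaxed-Stokes-Biot:stability:stab-prop}: under the hypotheses of Definition~\ref{defn:minimal-stokes-biot-stability}, $\mathcal{B}$ is continuous and satisfies the inf--sup bound with a constant $\gamma > 0$ independent of $\kappa$ and $c_0$. Since $U_h \times W_h \times Q_h$ is finite dimensional and $\mathcal{B}$ acts on the product of this space with itself, the inf--sup inequality states precisely that the induced linear operator $\mathcal{A}_h \colon U_h \times W_h \times Q_h \to (U_h \times W_h \times Q_h)^*$, defined by $\langle \mathcal{A}_h(u_h, z_h, p_h), (v_h, r_h, q_h) \rangle = \mathcal{B}(u_h, z_h, p_h; v_h, r_h, q_h)$, is bounded below, hence injective; a square injective linear map between finite-dimensional spaces of equal dimension is an isomorphism. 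Consequently, for each $m$ the reformulated step has a unique solution, with the stability estimate $\norm{(u_h^m, z_h^m, p_h^m)}_{UWQ} \le \gamma^{-1}\, \norm{F^m}_{(U_h \times W_h \times Q_h)^*} \le \gamma^{-1} C^m$. Proceeding by induction on $m$, starting from the prescribed initial data ($\div u_h^0$, together with $p_h^0$ when $c_0 > 0$), the scheme \eqref{eq:biot:full-var-disc} then determines a unique sequence $(u_h^m, z_h^m, p_h^m)_{m=1}^{N}$ depending continuously on the data; that is, \eqref{eq:biot:full-var-disc} is well posed.

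No deep obstacle arises, since Proposition~\ref{prop:relaxed-Stokes-Biot:stability:stab-prop} already carries the analytic content. The one point that warrants care is that $\mathcal{B}$ is \emph{not} symmetric, so invertibility cannot be read off from coercivity; instead one relies on the rank argument above, which in turn uses the equality of the (finite) dimensions of $U_h \times W_h \times Q_h$ and its dual — this is the finite-dimensional substitute for the second (adjoint) inf--sup hypothesis in the Banach--Ne\v{c}as--Babu\v{s}ka theorem. A minor bookkeeping remark is that, because $\gamma$ is $\kappa$- and $c_0$-independent, the well-posedness bound is uniform in these parameters up to the data-dependent factor $C^m$ — the feature that Section~\ref{sec:a-priori:full-model} subsequently develops into parameter-robust error estimates.
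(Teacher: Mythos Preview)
Your proof is correct but takes a genuinely different route from the paper's. The paper invokes the full Banach--Ne\v{c}as--Babu\v{s}ka theorem and then \emph{explicitly} verifies the second (adjoint injectivity) condition: it fixes $(v_h, r_h, q_h)$ with $\mathcal{B}(u_h, z_h, p_h; v_h, r_h, q_h) = 0$ for all trial triples, splits into the cases $c_0 > 0$ and $c_0 = 0$, and in the latter case uses the Stokes inf--sup~\eqref{eq:stokes:infsup} to manufacture a perturbed test function $u_h = v_h + \delta y_h$ that forces $v_h = r_h = q_h = 0$. You bypass this entirely by the rank argument: the inf--sup bound of Proposition~\ref{prop:relaxed-Stokes-Biot:stability:stab-prop} already gives injectivity of the induced operator $\mathcal{A}_h$, and since trial and test spaces coincide and are finite dimensional, injectivity is equivalent to bijectivity. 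Your route is shorter and more elementary; the paper's route is more constructive and would survive in an infinite-dimensional (continuous) setting, where the finite-dimensional shortcut is unavailable and the adjoint condition must be checked directly. It is also worth noting that the paper's explicit verification makes visible a second place where the Stokes inf--sup assumption does real work, whereas in your argument that dependence is hidden inside Proposition~\ref{prop:relaxed-Stokes-Biot:stability:stab-prop}.
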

\begin{proof}
The Banach-Ne\u{c}as-Babu\u{s}ka theorem \cite[Theorem 2.6]{GUERMONDERN}, 
applied to \eqref{eq:biot:full-var-disc}, requires that two conditions are 
satisfied.  The first condition is that of 
Proposition~\ref{prop:relaxed-Stokes-Biot:stability:stab-prop}, which has been 
proved, independently, by several authors.  The second condition, which remains 
to be verified, is that if an element $(v_h,r_h,q_h)\in U_h\times W_h \times Q_h$  
is such that 
\[
\mathcal{B}(u_h,z_h,p_h;v_h,r_h,q_h) = 0,\quad \foralls (u_h,z_h,p_h) \in U_h\times W_h \times Q_h,  
\]
then $v_h = r_h = q_h = 0$ must follow.  To show that this condition also holds true, fix 
$(v_h,r_h,q_h) \in U_h\times W_h \times Q_h$ and suppose that the above implication  
holds; we need to show that, in this case, it must be that $v_h = r_h = q_h = 0$.  
Towards this end we consider two cases: the first case is if $c_0 > 0$, and the 
second case is if $c_0 = 0$. For the first case, select $u_h = v_h$, 
$z_h = r_h$ and $p_h = -q_h$, along with \eqref{eq:W-norm}, the hypothesis above 
and \eqref{eq:biot:forms}, to get 
\[
 \mathcal{B}(v_h,r_h,-q_h;v_h,r_h,q_h) = a(v_h,v_h) + \frac{\tau}{\kappa} \norm{r_h}^2 + c_0 \norm{q_h}^2 = 0. 
\]
Coercivity (c.f.~\eqref{eq:a:bounds}) gives 
$\gamma_a \norm{v_h}_1^2 \leq a(v_h,v_h)$ and $v_h = r_h = q_h = 0$ 
follows.  For the second case, assume that $c_0 = 0$.  The Stokes stability 
assumption (Definition~\ref{defn:minimal-stokes-biot-stability}(ii)) implies 
that (e.g.~\cite[p.~136]{braess2002finite}) there exists $y_h \in U_h$ such that
\begin{align}
	\inner{\div y_h}{q_h} &= \norm{q_h}^2, %
    	\label{eq:stability-proof:Stokes-inf-sup:a} \\
	\beta_S \norw{y_h}{1} &\leq \norm{q_h} %
	\label{eq:stability-proof:Stokes-inf-sup:b},
\end{align}
where $\beta_S$ is the Stokes inf-sup constant of \eqref{eq:stokes:infsup}.  
Let $\delta \geq 0$ be a yet-undetermined, but fixed, constant and choose 
$u_h = v_h + \delta y_h$, $z_h = r_h$, and $p_h = -q_h$. With these choices, 
and \eqref{eq:biot:forms}, we have
\[
 \mathcal{B}(v_h + \delta y_h,r_h,-q_h;v_h,r_h,q_h) = a(v_h,v_h) + 
\delta a(y_h,v_h) + \frac{\tau}{\kappa}\norm{r_h}^2 + \delta \norm{q_h}^2 = 0.
\]
The coercivity and continuity assumptions (c.f.~\eqref{eq:a:bounds}), together 
with Cauchy's inequality with epsilon, \eqref{eq:stability-proof:Stokes-inf-sup:b} 
and gathering of like terms gives
\begin{equation}
  \left(\gamma_a - \delta C_a \epsilon \right) \norw{v_h}{1}^2 +
  \frac{\tau}{\kappa} \norm{r_h}^2 + 
  \delta \left( 1- \frac{C_a}{4 \beta_S^2 \epsilon} \right)
  \norm{q_h}^2 \leq 0.
\end{equation}
We can now select the appropriate constants $\delta$ and $\epsilon$, as e.g.
\begin{equation}
  \epsilon = 2 \frac{C_a}{4 \beta_S^2} > 0,
  \quad
  \delta = \frac{\gamma_a \beta_S^2}{C_a^2} > 0,
\end{equation}
from which it follows that
\begin{equation}
  \label{eq:stability:first-inequal}
  \gamma_a \norw{v_h}{1}^2 +
  \frac{\tau}{\kappa} \norm{r_h}^2 + 
  \frac{1}{2} \frac{\gamma_a \beta_S^2}{C_a^2}
  \norm{q_h^m}^2 \leq 0.
\end{equation}
and thus $v_h = r_h = q_h = 0$.  Thus, the second condition of the 
Banach-Ne\u{c}as-Babu\u{s}ka theorem \cite[Theorem 2.6]{GUERMONDERN} holds, 
irregardless of $c_0$, and the result follows.
\end{proof}

\section{A priori error estimates for minimally Stokes-Biot stable schemes}
\label{sec:a-priori:full-model}


In this section, we derive a-priori error estimates for the
Euler-Galerkin discrete Biot equations~\eqref{eq:biot:full-var-disc} using the 
assumptions of Definition~\ref{defn:minimal-stokes-biot-stability}.  The final 
result is summarized in Proposition~\ref{cor:conv-est-specialized} of
Section~\ref{sec:a-priori:full-model:conv-estimates}.  We will assume
the point of view of minimal Stokes-Biot stability as defined
by~Definition~\ref{defn:minimal-stokes-biot-stability} and that $U_h$
contains the continuous nodal Lagrange elements $P_r$ for some $r \geq
1$. We begin by establishing basic assumptions on the spaces
$U_h, W_h$ and $Q_h$, and define projection operators in
Section~\ref{sec:a-priori:full-model:interpolants}.


\subsection{Projections and approximability}%
\label{sec:a-priori:full-model:interpolants}

As in the previous section, let $U$, $W$, $Q$ be given by
\eqref{eq:biot:spaces} with norms $\| \cdot \|_1$, $\Wnorm{\cdot}$
cf.~\eqref{eq:W-norm}, and $\| \cdot \|$, respectively. Assume that
the discrete spaces $U_h \times W_h \times Q_h$ satisfy the
assumptions of Definition~\ref{defn:minimal-stokes-biot-stability}. We
denote the (continuous) solutions to \eqref{eq:biot:var} at time $t^m$
by $(u^m, z^m, p^m)$ for $m = 1, 2, \dots, N$ while $(u_h^m, z_h^m,
p_h^m)$ represent the solutions of the discrete problem
\eqref{eq:biot:full-var-disc}.  %
For use in the subsequent error analysis, we make basic assumptions on
the spaces $U_h, W_h$ and $Q_h$, and define projection operators
$\Pi_{U_h}: U \rightarrow U_h$, $\Pi_{W_h} : W\rightarrow W_h$ and
$\Pi_{Q_h}: Q\rightarrow Q_h$ as follows. 

\begin{itemize}
  \item[$Q_h$:]
    Define $\Pi_{Q_h}$ to be the standard $L^2$-projection into
    $Q_h$. Then
    \begin{equation*}
      \| q - \Pi_{Q_h} q \| \ls \inf_{q_h \in Q_h}\| q - q_h\|,
    \end{equation*}
    for all $q \in Q$. If $Q_h$ contains piecewise polynomials of
    order $k = k_Q \geq 0$, then in particular
    \begin{equation}
      \label{eq:proj-pressure-ineq}
      \norm{q - \Pi_{Q_h}q} \ls h^{k_Q+1} \norw{q}{k_Q+1}, %
		\quad \foralls w \in H^{k}.
    \end{equation}
  \item[$W_h$:]
    Assume that $W_h$ contains (at least) piecewise polynomial
    (vector) fields of order $k = k_W \geq 0$.  We assume the
    existence of a generic discrete interpolant $\Pi_{W_h}: W
    \rightarrow W_h$ satisfying either
    \begin{equation}
      \label{eq:proj-commut-div-flux-ineq}
      \begin{array}{lcr}
        \norm{w - \Pi_{W_h} w} \ls h^{k_W + 1} \norw{w}{k_W + 1} & \text{and} & \norm{\div(w - \Pi_{W_h} w)} \ls h^{k_W+1} \norw{\div w}{k_W+1},
      \end{array}
    \end{equation}
    for $w \in H^{k_W+2}$, or
    \begin{equation}
      \label{eq:proj-commut-div-flux-ineq-lagrange}
      \begin{array}{lcr}
        \norm{w - \Pi_{W_h}w} \ls h^{k_W+1} \norw{w}{k_W+1} & \text{and} & \norw{w - \Pi_{W_h} w}{1} \ls h^{k_W}\norw{w}{k_W+1}.
      \end{array}
    \end{equation}
    for $w \in H^{k+1}(\Omega)$. The estimates
    \eqref{eq:proj-commut-div-flux-ineq} are characteristic of a
    Raviart-Thomas type, $RT_{k}$ ($k = 0, 1, 2, \dots$), interpolant
    whereas \eqref{eq:proj-commut-div-flux-ineq-lagrange} could
    correspond to a continuous Lagrange interpolant of order $k \geq
    1$~\cite{GUERMONDERN}.
  \item[$U_h$:]
    Following \cite{rodrigo2018new}, we define $\Pi_{U_h} : U \rightarrow U_h$ as a
    modified elliptic projection satisfying for $u \in U$:
    \begin{equation}
      \label{eq:proj-ellip-disp}
      a(\Pi_{U_h} u, v) = a(u, v) + b(v, q - \Pi_{Q_h} q) \quad \foralls v \in U_h, 
    \end{equation}
    where $q \in Q$ is given and will, in practice, be selected as the
    exact pressure solution to \eqref{eq:biot:var} at given times.

    Assume that $U_h$ contains (at least) continuous piecewise
    polynomial (vector) fields of order $k_U \geq 1$. There then
    exists an interpolant, $I^{k_U}: U \rightarrow U_h$, such that
    \begin{equation*}
      \norw{u - I^{k_U} u}{1} \ls h^{k_U} \norw{u}{k_U+1} 
    \end{equation*}
    for all $u \in H^{k_U+1}$, c.f. e.g~\cite{GUERMONDERN}.  Then for
    $u \in U$ we have
    \begin{equation*}
      \norw{u- \Pi_{U_h} u}{1}
      \leq \norw{u-I^{k_U} u}{1} + \norw{I^{k_U} u - \Pi_{U_h}u}{1} 
      \ls h^{k_U} \norw{u}{k_U+1} +  \norw{I^{k_U} u - \Pi_{U_h}u}{1}.
    \end{equation*}
    Using assumption (i) of
    Definition~\ref{defn:minimal-stokes-biot-stability} and
    \eqref{eq:proj-ellip-disp} with $v = \Pi_{U_h} u - I^{k_U} u$ imply
    that
    \begin{align*}
      \gamma_a \norw{\Pi_{U_h} u - I^{k_U} u}{1}^2 & \leq a(\Pi_{U_h} u - I^{k_U} u,\Pi_{U_h} u - I^{k_U} u) \\
      & =  a(u - I^{k_U} u, \Pi_{U_h} u - I^{k_U} u) + b(\Pi_{U_h} u - I^{k_U} u, q -\Pi_{Q_h}q)\\
      & \leq \norw{\Pi_{U_h} u - I^{k_U} u}{1} \left( C_a \norw{u - I^{k_U} u}{1} + \norm{q - \Pi_{Q_h}q} \right).
    \end{align*}
    Combining the above with assumption \eqref{eq:proj-pressure-ineq} gives
    \begin{equation}
      \label{eq:proj-ellip-disp-ineq}
      \norw{u - \Pi_{U_h} u}{1} \ls h^{k_U} \norw{v}{k_U + 1} + h^{k_Q+1}\norw{q}{k_Q+1}, 
    \end{equation}
    where $q \in Q$ is the fixed function defining the elliptic projection
    \eqref{eq:proj-ellip-disp}.
\end{itemize}

\subsection{Interpolation notation and identities}
\label{subsec:interpolation-and-approximation}

Following standard notation
\cite{hong2017parameter,lee2018,rodrigo2018new}, the error at time $t^m>0$ can be decomposed into 
interpolation errors $\rho$ and approximation errors $e$:
\begin{equation}
  \begin{split}
  u^m - u^m_h &= \left(u^m - \Pi_{U_h} u^m\right) - \left(u_h^m - \Pi_{U_h} u^m\right) = \rho_u^m - e_u^m \\
  z^m - z^m_h &= \left(z^m - \Pi_{W_h} z^m\right) - \left(z_h^m - \Pi_{W_h} z^m\right) = \rho_z^m - e_z^m\\
  p^m - p^m_h &= \left(p^m - \Pi_{Q_h} p^m\right) - \left(p_h^m - \Pi_{Q_h} p^m\right) = \rho_p^m - e_p^m.
  \end{split}
  \label{eq:err-analy-decomp}
\end{equation}
The interpolation errors satisfy the following identities. Since $\div
W_h \subseteq Q_h$ and by the definition of the $L^2$-projection
$\Pi_{Q_h}$, we have that
\begin{align}
  \label{eq:can:1}
  b(w, \rho_p^m) = \inner{\div w}{p^m - \Pi_{Q_h} p^m} = 0 \quad \foralls w \in W_h.
\end{align}
Similarly, by the definition of $\Pi_{Q_h}$,
\begin{align}
  \label{eq:can:2}
  d(\partial_{\tau} \rho_p^m, q) = c_0 \inner{\partial_{\tau} \rho_p^m}{q} = 0 \quad \foralls q \in Q_h,
\end{align}
where we recall the discrete derivative
notation~\eqref{eq:disc-deriv-notation}. Finally,~\eqref{eq:proj-ellip-disp}
directly gives
\begin{align}
  \label{eq:can:3}
  a(\rho_{u}^m, v) + b(v, \rho_{p}^m) = 0,  \quad \foralls v \in U_h.
\end{align}


Taking the difference between the continuous equations
\eqref{eq:biot:var} and discrete scheme \eqref{eq:biot:full-var-disc},
after multiplying \eqref{eq:biot:var:b} by $\tau$, combined with the
cancellations~\eqref{eq:can:1}--\eqref{eq:can:3}, yield the following
error equations at $t^m$: $(e_u^m, e_z^m, e_p^m)$ satisfies
\begin{subequations}
  \begin{align}
    \label{eq:a-priori:full-model:err-eq:a}  
    a(e_u^m, v) + b(v, e_p^m) &= 0
    && \foralls v_h \in U_h, \\
    \label{eq:a-priori:full-model:err-eq:b}  
    \tau c(e_z^m, w) + \tau b(w, e_p^m) &= \tau c(\rho_z^m, w),
    && \foralls w \in W_h, \\
    \label{eq:a-priori:full-model:err-eq:c}
    b(\partial_{\tau}e_u^m, q) + b(e_z^m, q) - d(\partial_{\tau}e_p^m, q) &= \inner{R^m}{q}
    && \foralls q \in Q_h,
  \end{align}
  \label{eq:a-priori:full-model:err-eq}  
\end{subequations}
where 
\begin{equation}
  \label{eq:first-ineq-residual}
  R^m = \div(\partial_t u^m - \partial_{\tau} u^m) + \div(\partial_{\tau}\rho_u^m) + \div \rho_z^m + c_0(\partial_t p^m - \partial_{\tau}p^m) ,
\end{equation}
by way of the general identity
\begin{equation}
  \label{eq:general-dt-dtau-identity}
  \partial_t u^m - \partial_{\tau} u_h^m = \partial_t u^m - \partial_{\tau} u^m  + %
  \partial_{\tau}\rho_{u}^m - \partial_{\tau} e_u^m,
\end{equation}
and similarly for $p$.

\subsection{Discrete approximation error estimates}
\label{sec:a-priori:full-model:error-estimates}

In this section we estimate the discrete errors described by
\eqref{eq:a-priori:full-model:err-eq} in their respective norms; that
is, $\norw{e_u^m}{1}$, $\Wnorm{e_z^m}$ and $\norm{e_p^m}$. In contrast
to e.g.~\cite{rodrigo2018new}, we do not make use of the restrictive
uniform-in-$\kappa$ Darcy stability assumption. In turn, the error
equations require a more technical analysis and we have adapted
related methods originally used to study $\kappa$ fixed 
\cite{lee2018} and vanishing ($c_0$) storage coefficient. Despite the more technical approach,
the resulting estimates presented in~Proposition
\ref{cor:conv-est-specialized} is directly comparable to related
results in the literature; c.f.~\cite[Lem.~3]{hong2017parameter},
\cite[Thm.~4.1]{lee2018} and \cite[Thm~4.6]{rodrigo2018new}. We
conclude that the concept of minimal Stokes-Biot stability provides
analogous error estimates for a more general set of conforming
discrete spaces than the original Stokes-Biot stability concept.

During the course of the analysis will make use of the following
useful inequality
\begin{lemma}{\cite[Lemma 3.2]{lee2018}}
  \label{lemma:lee}
  Suppose that $A$, $B$, $C$ $>0$ and $D\geq 0$ satisfy
  \[
  A^2 + B^2 \leq CA + D.
  \]
  Then either $A+B \leq 4C$ or $A+B \leq 2\sqrt{D}$ holds.
\end{lemma}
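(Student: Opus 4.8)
The plan is to prove the stated dichotomy by a direct case split on which of the two terms on the right-hand side, the linear term $CA$ or the constant term $D$, dominates. The one small observation that makes the constants come out exactly as stated is the elementary bound $(A+B)^2 \le 2(A^2+B^2)$; combined with the hypothesis this gives the working inequality
\[
(A+B)^2 \le 2(A^2+B^2) \le 2(CA+D).
\]
Everything else is bookkeeping.

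First I would treat the case $CA \le D$. Then $2(CA+D) \le 4D$, so $(A+B)^2 \le 4D$ and hence $A+B \le 2\sqrt{D}$, which is the second alternative. Next, suppose instead $CA > D$. The key intermediate step is to extract a bound on $A$ alone: since $A^2 \le A^2+B^2 \le CA+D < 2CA$ and $A>0$, dividing through by $A$ gives $A < 2C$. Substituting this back into the working inequality yields $(A+B)^2 \le 2(CA+D) < 4CA < 8C^2 \le 16C^2$, so $A+B < 4C$, which is the first alternative. Since one of $CA \le D$ or $CA > D$ must hold, the dichotomy follows.

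There is no genuine obstacle here; the content lies entirely in picking the right case split and in remembering to use $(A+B)^2 \le 2(A^2+B^2)$ so that the square-root alternative reads $2\sqrt{D}$ rather than a weaker $2\sqrt{2D}$. The only points requiring the hypotheses $A,B,C>0$ and $D\ge 0$ are the division by $A$ in the second case and the (trivial) fact that all quantities are nonnegative when taking square roots; no lower bound on $D$ is needed.
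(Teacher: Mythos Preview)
Your proof is correct. The paper does not actually supply a proof of this lemma; it simply quotes the result with a citation to \cite[Lemma~3.2]{lee2018}, so there is no in-paper argument to compare against. Your case split on whether $CA \le D$ or $CA > D$, combined with the elementary bound $(A+B)^2 \le 2(A^2+B^2)$, is a clean and standard way to establish the dichotomy, and the constants come out exactly as stated.
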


\begin{proposition}
  \label{prop:disc-errors}
  Suppose that $U_h \times W_h \times Q_h$ is minimally Stokes-Biot
  stable (by satisfying the assumptions of
  Definition~\ref{defn:minimal-stokes-biot-stability}). Then, the
  discrete approximation errors $(e_u^m, e_z^m, e_p^m)$ described by
  \eqref{eq:a-priori:full-model:err-eq} satisfy the inequality:
  \begin{multline}
      \norw{e_u^m}{1} + \norm{e_p^m} + \Wnorm{e_z^m}
      \ls \norw{e_u^0}{1} + \norw{e_p^0}{d} 
      + \tau^{1/2} \norw{e_z^0}{c}  \\
      + \left( \int_{0}^{T} \norw{\rho_z}{c}^2  \ds \right)^{1/2} 
      + \tau \int_0^T \norw{\rho_{\partial_t z}}{c}^2 \ds 
      + C_{\tau}^T ,
      \label{eq:prop:disc-errors}
  \end{multline}
  with inequality constant depending on $C_a$, $\gamma_a^{-1}$ and where 
  \begin{equation*}
    C_{\tau}^m
    \equiv \int_{0}^{t_m} \norm{\div \rho_z} + \norw{\rho_{\partial_t u}}{1} + \tau \left ( c_0 \norm{\partial_{tt}p} + \norw{\partial_{tt}u}{1} \right) \ds .
  \end{equation*}
\end{proposition}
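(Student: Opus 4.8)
The plan is to run two discrete energy estimates --- one for the approximation errors $(e_u^m,e_z^m,e_p^m)$ solving \eqref{eq:a-priori:full-model:err-eq}, one for their backward differences $(\partial_\tau e_u^m,\partial_\tau e_z^m,\partial_\tau e_p^m)$ --- using the Stokes inf--sup condition \eqref{eq:stokes:infsup} and the containment $\div W_h\subseteq Q_h$ of Definition~\ref{defn:minimal-stokes-biot-stability} in place of the (now unavailable) parameter--uniform mixed--Darcy inf--sup and its Galerkin projector employed in \cite{rodrigo2018new}.

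For the first estimate I would take $v=\partial_\tau e_u^m$ in \eqref{eq:a-priori:full-model:err-eq:a}, $w=e_z^m$ in \eqref{eq:a-priori:full-model:err-eq:b} (then divide that equation by $\tau$), and $q=e_p^m$ in \eqref{eq:a-priori:full-model:err-eq:c}, then add the first two error equations and subtract the third. The four $b(\cdot,\cdot)$ terms cancel pairwise, leaving $a(e_u^m,\partial_\tau e_u^m)+c(e_z^m,e_z^m)+d(\partial_\tau e_p^m,e_p^m)=c(\rho_z^m,e_z^m)-\inner{R^m}{e_p^m}$. Using $2\,a(x,x-y)\ge\norw{x}{a}^2-\norw{y}{a}^2$ (and likewise for $d$), multiplying by $2\tau$ and summing over $m=1,\dots,n$ telescopes the $a$- and $d$-terms, while a Young step absorbs $c(\rho_z^m,e_z^m)$; what survives on the right is $\norw{e_u^0}{a}^2+\norw{e_p^0}{d}^2+\sum_m\tau\norw{\rho_z^m}{c}^2+\sum_m\tau\,|\inner{R^m}{e_p^m}|$. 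This is where Darcy stability becomes unnecessary: $R^m$ from \eqref{eq:first-ineq-residual} enters \emph{only} paired with $e_p^m$, so $|\inner{R^m}{e_p^m}|\le\norm{R^m}\norm{e_p^m}$ --- with $\norm{R^m}$ collecting the backward--Euler consistency terms $\div(\partial_t u^m-\partial_\tau u^m)$, $c_0(\partial_t p^m-\partial_\tau p^m)$ and the interpolation quantities $\div\partial_\tau\rho_u^m$, $\div\rho_z^m$ --- and $\norm{e_p^m}$ is controlled in the \emph{unweighted} $L^2$ norm by testing \eqref{eq:a-priori:full-model:err-eq:a} against the Stokes inf--sup function of $e_p^m$: continuity of $a$ then gives $\norm{e_p^m}\ls\norw{e_u^m}{1}\ls\norw{e_u^m}{a}$. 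Feeding this back renders the right--hand side quadratic in $\max_n\norw{e_u^n}{a}$, and Lemma~\ref{lemma:lee} closes the estimate, bounding $\norw{e_u^n}{1}$, $\norm{e_p^n}$ and $\big(\sum_m\tau\norw{e_z^m}{c}^2\big)^{1/2}$; a Taylor expansion of the consistency errors and the identity $\partial_\tau\rho_u^m=\tau^{-1}\int_{t^{m-1}}^{t^m}\rho_{\partial_t u}\ds$ turn $\sum_m\tau\norm{R^m}$ into $C_\tau^n$ and $\sum_m\tau\norw{\rho_z^m}{c}^2$ into $\int_0^T\norw{\rho_z}{c}^2\ds$.

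I would then difference \eqref{eq:a-priori:full-model:err-eq} in time: applying $\partial_\tau$ produces the identical system for $(\partial_\tau e_u^m,\partial_\tau e_z^m,\partial_\tau e_p^m)$ with data $\partial_\tau\rho_z^m$, $\partial_\tau R^m$, and repeating the argument above --- now with the Stokes inf--sup applied to the differenced \eqref{eq:a-priori:full-model:err-eq:a} to get $\norm{\partial_\tau e_p^m}\ls\norw{\partial_\tau e_u^m}{1}$ --- bounds $\max_m\norw{\partial_\tau e_u^m}{1}$ in terms of the first--step quantities (read off from \eqref{eq:a-priori:full-model:err-eq} at $m=1$, which is where $\tau^{1/2}\norw{e_z^0}{c}$ appears next to $\norw{e_u^0}{1}$ and $\norw{e_p^0}{d}$), a contribution built from $\int_0^T\norw{\rho_{\partial_t z}}{c}^2\ds$, and a further $C_\tau$--type term. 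The flux norm then follows: $\Wnorm{e_z^m}^2=\tau\norw{e_z^m}{c}^2+\tau^2\norm{\div e_z^m}^2$ by \eqref{eq:W-norm}, its first summand is dominated by the already bounded $\sum_{k\le m}\tau\norw{e_z^k}{c}^2$, and for the second I would use $\div W_h\subseteq Q_h$ to choose $q=\div e_z^m\in Q_h$ in \eqref{eq:a-priori:full-model:err-eq:c}, which gives $\norm{\div e_z^m}\le\norm{R^m}+\norw{\partial_\tau e_u^m}{1}+c_0\norm{\partial_\tau e_p^m}$ --- each term already estimated, the last via the differenced inf--sup and $c_0<1$; multiplying through by $\tau$ supplies the remaining terms of \eqref{eq:prop:disc-errors}.

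The hard part is the first step: absent a parameter--uniform mixed--Darcy inf--sup one cannot build the Galerkin projector of \cite{rodrigo2018new}, so one must instead verify that every occurrence of $e_p^m$ or $\partial_\tau e_p^m$ not already annihilated by the cancellations \eqref{eq:can:1}--\eqref{eq:can:3} is tied back to $\norw{e_u^m}{1}$ (respectively $\norw{\partial_\tau e_u^m}{1}$) through \eqref{eq:stokes:infsup} --- this is exactly what turns the residual term into a \emph{quadratic}, rather than linear, Gronwall inequality and forces the use of Lemma~\ref{lemma:lee}. The secondary, purely bookkeeping, difficulty is keeping the $\kappa$- and $c_0$-weights consistent across both energy estimates so that no inverse power of $\kappa$ is introduced and the storage--coefficient weight remains below one; note that it is the containment $\div W_h\subseteq Q_h$ --- not full Darcy stability --- that makes both the cancellation \eqref{eq:can:1} and the divergence estimate above possible, so condition (iii) of Definition~\ref{defn:original-stokes-biot-stability} may be dropped.
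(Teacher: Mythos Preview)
Your plan is essentially the paper's: the same test functions in the first energy identity, the same use of the Stokes inf--sup \eqref{eq:stokes:infsup} to trade $\norm{e_p^m}$ for $\norw{e_u^m}{1}$, the same appeal to Lemma~\ref{lemma:lee}, and the same choice $q=\div e_z^m\in Q_h$ (via condition~(iii)) for the divergence term. The organisation differs only in where the pointwise bound $\tau\norw{e_z^m}{c}^2$ comes from: you take it from the cumulative sum already sitting on the left of the first estimate, whereas the paper runs a second estimate specifically for $\norw{e_z^m}{c}$.

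There is, however, one genuine slip in your second step. You propose to apply $\partial_\tau$ to \emph{all three} error equations and then ``repeat the argument above''; that puts $\partial_\tau R^m$ on the right-hand side, and bounding $\sum_m\tau\norm{\partial_\tau R^m}$ would demand an extra time derivative of $u$ and $p$ (i.e.\ $\partial_{ttt}u$, $\partial_{ttt}p$) beyond what the Proposition asserts. The paper sidesteps this by differencing only \eqref{eq:a-priori:full-model:err-eq:a}--\eqref{eq:a-priori:full-model:err-eq:b} and testing the \emph{undifferenced} \eqref{eq:a-priori:full-model:err-eq:c} with $q=-\partial_\tau e_p^m$; the $b$-terms still cancel and only $R^m$, not $\partial_\tau R^m$, survives. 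With that modification your argument goes through and produces the same terms $\tau^{1/2}\norw{e_z^0}{c}$ and $\tau\!\int_0^T\norw{\rho_{\partial_t z}}{c}^2\ds$ that appear in \eqref{eq:prop:disc-errors}.

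In fact, your own route makes the differenced estimate unnecessary for the divergence bound: once you multiply $\norm{\div e_z^m}\le\norm{R^m}+\norw{\partial_\tau e_u^m}{1}+c_0\norm{\partial_\tau e_p^m}$ through by $\tau$, the last two terms become $\norw{e_u^m-e_u^{m-1}}{1}$ and $c_0\norm{e_p^m-e_p^{m-1}}$, both already dominated by $\max_j\norw{e_u^j}{1}$ from step~1 (the paper does exactly this). So you can either adopt the partial-differencing trick or drop the differenced estimate altogether; either way the stated regularity suffices.
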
 
\begin{proof}
In an analogous fashion as for
Proposition~\ref{prop:relaxed-Stokes-Biot:stability:stab-prop},
multiplying \eqref{eq:a-priori:full-model:err-eq:c} by $\tau$,
selecting $v = e_u^m - e_u^{m-1}$, $w = e_z^m$, and $q = -e_p^m$ in
\eqref{eq:a-priori:full-model:err-eq} and summing gives
\begin{equation}
  \label{eq:first-ineq-basis}
  \begin{split}
  a(e_u^m - e_u^{m-1}, e_u^m) + \tau c(e_z^m,e_z^m) 
  + d(e_p^m - e_p^{m-1},e_p^m) 
  = \tau c(\rho_z^m,e_z^m) - \tau\inner{R^m}{e_p^m} .
  \end{split}
\end{equation}

For any (continuous) symmetric bilinear form $a$ with induced norm
$\norw{\cdot}{a}$ we have the inequality~\cite{ern-munier-2009}
\begin{equation}
  \label{eq:inequality:symmetric-bilinear-form}
  \frac12\left(\norw{\chi}{a}^2 - \norw{\chi-\xi}{a}^2\right) \leq a(\xi,\chi).
\end{equation}
Using the above, and the symmetry of both $a(\cdot,\cdot)$ and
$d(\cdot,\cdot)$, it follows that the left-hand side of
\eqref{eq:first-ineq-basis} is bounded below by
\begin{equation}\label{eq:a-priori:err-eq:lower-est-a}
  \frac12 \norw{e_u^m}{a}^2 - \frac12 \norw{e_u^{m-1}}{a}^2 + %
  \tau \norw{e_z^m}{c}^2 + \frac12 \norw{e_p^m}{d}^2 - %
  \frac12 \norw{e_p^{m-1}}{d}^2.
\end{equation}
On the other hand, Cauchy-Schwarz and Young's inequality give
\begin{equation}
  \label{eq:a-priori:err-eq:upper-est-a}
  |\tau c(\rho_z^m,e_z^m)| \leq \frac{\tau}{2} \norw{\rho_z^m}{c}^2  + \frac{\tau}{2} \norw{e_z^m}{c}^2. 
\end{equation}
From the Stokes stability assumption~\eqref{eq:stokes:infsup} and
\eqref{eq:a-priori:full-model:err-eq:a} we have the estimate
\begin{equation}
  \label{eq:err-est:stokes-inf-sup-est}
  \beta_S \norm{e_p^m} \leq \sup_{v \in U_h} \frac{b(v, e_p^m)}{\norw{v}{1}}
  = \sup_{v \in U_h}\frac{-a(e_u^m,v)}{\norw{v}{1}} \leq C_a \norw{e_u^m}{1}.
\end{equation}
Then, Cauchy-Schwarz, \eqref{eq:err-est:stokes-inf-sup-est} and the coercivity of $a$ gives
\begin{equation}\label{eq:a-priori:err-eq:upper-est-b}
  \tau |\inner{R^m}{e_p^m}| \leq  C_a \beta_S^{-1} \gamma_a^{-1/2} \tau \norm{R^m} \norw{e_u^m}{a}.
\end{equation}
Combining \eqref{eq:a-priori:err-eq:lower-est-a}, 
\eqref{eq:a-priori:err-eq:upper-est-a},
\eqref{eq:a-priori:err-eq:upper-est-b} yields
\begin{equation}
    \label{eq:a-priori:err-eq:summed-est-pre}
    \norw{e_u^m}{a}^2 - \norw{e_u^{m-1}}{a}^2 + \tau \norw{e_z^m}{c}^2 + \norw{e_p^m}{d}^2 - \norw{e_p^{m-1}}{d}^2
    \ls \tau \left (\norw{\rho_z^m}{c}^2 + \norm{R^m} \norw{e_u^m}{a} \right ).
\end{equation}
with inequality constant depending on $C_a \beta_S^{-1} \gamma_a^{-1/2}$. \\

\noindent \textbf{Estimate of $\norw{e_u^m}{a}$}: Following a
technique from \cite{lee2018}, let $J$ be the integer index where
$\norw{e_u^m}{a}$ (for $m = 1, \dots, N$) obtains its maximal
value. Summing \eqref{eq:a-priori:err-eq:summed-est-pre} from $m=1$ to
$m=J$, using the maximality assumption, and re-arranging terms yields
\begin{equation}
  \label{eq:a-priori:err-eq:summed-est-a}
  \norw{e_u^J}{a}^2 + \tau\sum\limits_{m=1}^{J}\norw{e_z^m}{c}^2 + 
  \norw{e_p^J}{d}^2 
  \ls \norw{e_u^0}{a}^2 + \norw{e_p^0}{c}^2 + \sum\limits_{m=1}^{J} \tau \norw{\rho_z^m}{c}^2 + \sum\limits_{m=1}^{J} \tau \norm{R^m} \norw{e_u^J}{a}.
\end{equation}
%
%
%
We can apply Lemma~\ref{lemma:lee} to \eqref{eq:a-priori:err-eq:summed-est-a} 
by taking $A = \norw{e_u^J}{a}$, $B = \norw{e_p^J}{d}$ and dropping the 
additional left-hand side term; then we choose 
\begin{equation*}
  C = \sum \limits_{m=1}^{J}\tau \norm{R^m}, \quad
  D =  \norw{e_u^0}{a}^2 + \norw{e_p^0}{c}^2 +\sum \limits_{m=1}^{J}\tau\norw{\rho_z^m}{c}^2,
\end{equation*}
%
and, provided appropriate temporal regularity of the exact solution, have
\begin{equation*}
\sum\limits_{m=1}^{J} \tau \norw{\rho_z^m}{c}^2 \ls \int_{0}^{t^J} \norw{\rho_z}{c}^2 \ds.
\end{equation*}
Lemma~\ref{lemma:lee}, with the above and the triangle inequality, implies
\begin{equation}
  \norw{e_u^J}{a} + \norw{e_p^J}{d}
  \ls \norw{e_u^0}{a} + \norw{e_p^0}{d} + \sum\limits_{m=1}^{J}\tau\norm{R^m} + \left(\int_{0}^{t^J} \norw{\rho_z}{c}^2\right)^{1/2}.
  \label{eq:a-priori:err-eq:summed-est-b}
\end{equation}

\noindent \textbf{Bound of $\tau \norm{R^m}$}:  We now develop a bound for the terms $\tau\norm{R^m}$; 
c.f.~\eqref{eq:first-ineq-residual}.  From the fundamental theorem of calculus 
and integration by parts we have the general result
\begin{equation*}
  \partial_t f^m - \partial_{\tau}f^m = \frac{1}{\tau}\int_{t^{m-1}}^{t^m} (s-t^{m-1})\partial_{tt}f(s) \ds 
\end{equation*}
for any $m = 1, \dots, N$, assuming sufficient temporal regularity of
the field $f$. We therefore, again under the assumption of sufficient
spatial and temporal regularity, have the inequalities
\begin{align}
  c_0 \norm{\partial_t p^m - \partial_{\tau}p^m}
  &\leq \int_{t^{m-1}}^{t^m} c_0 \norm{\partial_{tt} p} \ds
  \label{eq:residual-ineq-bound:a}\\
  \norm{\div\left(\partial_t u^m - \partial_{\tau}u^m\right)}
  &\leq \int_{t^{m-1}}^{t^{m}} \norw{\partial_{tt}u}{1} \ds,
  \label{eq:residual-ineq-bound:b}
\end{align}
which control the first and fourth terms of $\norm{R^m}$.

For the second term of $R^m$ we have $\norm{\div\partial_\tau
  \rho_u^m} \leq \norw{\partial_\tau \rho_u^m}{1}$. Rearranging the
terms of $\partial_{\tau}\rho_{u}^m$, applying the fundamental theorem
of calculus and using the commutation of the time derivative with the
elliptic projection \eqref{eq:proj-ellip-disp} yields
\begin{equation}
  \label{eq:residual-ineq-bound:c}
  \norw{\partial_{\tau}\rho_{u}^m}{1} = 
  \norw{\frac{u^m - u^{m-1}}{\tau} - \frac{\Pi_{U_h}u^m - \Pi_{U_h} u^{m-1}}{\tau}}{1} \leq 
  \frac{1}{\tau} \int_{t^{j-1}}^{t^j} \norw{\rho_{\partial_t u}}{1} \ds .
\end{equation} 
For the third term of $R^m$ we have, again up to sufficient temporal regularity of the exact solution, 
that
\begin{equation}
  \label{eq:residual-ineq-bound:d}
  \sum\limits_{m=1}^{J}\tau\norm{\div\rho_z^m} \ls %
  \int_{0}^{t^J} \norm{\div \rho_z} \ds .
\end{equation}
Summarizing, \eqref{eq:residual-ineq-bound:a}-\eqref{eq:residual-ineq-bound:d} %
thus yield
\begin{equation}
  \begin{split}
    \sum_{m=1}^J \tau \norm{R^m} \ls
    &\int_{0}^{t^J} \norm{\div \rho_z} + \norw{\rho_{\partial_t u}}{1} + %
    \tau \left ( c_0 \norm{\partial_{tt}p} + %
    \norw{\partial_{tt}u}{1} \right) \ds \equiv C_{\tau}^J.
  \end{split}
  \label{eq:R:bound}
\end{equation}
And so, the estimate \eqref{eq:a-priori:err-eq:summed-est-b} becomes
\begin{equation}
    \label{eq:u-err-est:pre-bound:final}
    \norw{e_u^J}{a} + \norw{e_p^J}{d}
    \ls \norw{e_u^0}{a} + \norw{e_p^0}{d} +
    \left(\int_{0}^{t^J} \norw{\rho_z}{c}^2 \ds \right)^{1/2}
    + C_\tau^J
\end{equation}
Clearly, by Definition~\ref{defn:minimal-stokes-biot-stability}(i),
this also gives a bound for $\norw{e_u^m}{1}$ (depending on
$\gamma_a^{-1}$) for $m = 1, \dots, N$. \\

\noindent \textbf{Estimate of $\norm{e_p^m}$}: The norm
$\norw{e_p^J}{d}$ in e.g.~\eqref{eq:u-err-est:pre-bound:final}
vanishes in the limit as $c_0 \rightarrow 0$. An alternative bound for
$\norm{e_p^m}$ can be derived from the Stokes stability assumption,
Definition~\ref{defn:minimal-stokes-biot-stability}(ii). In
particular, using \eqref{eq:err-est:stokes-inf-sup-est} and
\eqref{eq:u-err-est:pre-bound:final} it follows that for each $1 \leq
m \leq N$:
\begin{equation}
  \label{eq:err-est:ep-bounded-by-eu}
  \norm{e_p^m} \ls \norw{e_u^m}{1} \ls \norw{e_u^J}{a},
\end{equation}
with inequality constant $C_a \beta_S^{-1} \gamma_a^{-1/2}$ and where
$J$ is the index where $\norw{e_u^J}{1}$ is maximal. Thus
$\norm{e_p^m}$ can be bounded by the right hand side of
\eqref{eq:u-err-est:pre-bound:final}, independently of $c_0$. \\

%
%
\noindent \textbf{Estimate of $\tau\norw{e_z^m}{c}^2$}: In order to
estimate the flux error in the norm defined
by~\eqref{eq:W-norm}, i.e.~$\Wnorm{e_z^m}$, it will be advantageous to %
consider the constituents
separately; e.g.~$\tau\norw{e_z^m}{c}^2$ and $\tau^2 \norm{\div
  e_z^m}^2$.

We begin by considering the first component and again argue based on
maximality. Take the difference of the error
equation \eqref{eq:a-priori:full-model:err-eq:a} at time levels $m$,
$m-1$ and dividing by $\tau$ to get
\begin{equation}
  \label{eq:apriori:max-z:first-est}
  a(\partial_{\tau} e_u^m, v) + b(v, \partial_{\tau} e_p^m) %
  = 0 \quad \text{for } v \in U_h.
\end{equation}
Similarly taking the difference of
\eqref{eq:a-priori:full-model:err-eq:b} at time levels $m$ and $m-1$,
and divide by $\tau^2$ to get
\begin{equation*}
  c(\partial_{\tau} e_z^m, w) + b(w, \partial_{\tau} e_p^m) = %
  c(\partial_{\tau}\rho_{z}^m, w) \quad \text{for } w \in W_h.
\end{equation*} 

Choose $v = \partial_{\tau} e_u^m$, $w = e_z^m$ in the above as well
as $q = - \partial_{\tau} e_p^m$ in
\eqref{eq:a-priori:full-model:err-eq:c}; summing these three
equations, using Cauchy-Schwarz on the right-hand side, and coercivity
on the left-hand side gives
\begin{align*}
  \gamma_a \norw{\partial_{\tau} e_u^m}{1}^2  + %
  \norw{\partial_{\tau} e_p^m}{d}^2 + c(\partial_{\tau} e_z^m, e_z^m) 
  \leq \norw{\partial_{\tau}\rho_z^m}{c}\norw{e_z^m}{c} + %
  \norm{R^m} \norm{\partial_{\tau} e_p^m} .
\end{align*}
From Definition~\ref{defn:minimal-stokes-biot-stability}(ii) and
\eqref{eq:apriori:max-z:first-est} we have that
$\norw{\partial_{\tau}e_p^m}{} \leq C_a \beta_S^{-1}
\norw{\partial_{\tau}e_u^m}{1}$ by the analogue
of~\eqref{eq:err-est:stokes-inf-sup-est}. Using this on the right-most
term of the above, alongside Cauchy's inequality with epsilon and
choosing epsilon appropriately, gives
\begin{equation*}
  \norw{\partial_{\tau}e_u^m}{1}^2  + \norw{\partial_{\tau} e_p^m}{d}^2 + %
  c(\partial_{\tau} e_z^m, e_z^m) 
  \ls \norw{\partial_{\tau}\rho_z^m}{c} \norw{e_z^m}{c} + \norm{R^m}^2,
\end{equation*}
with inequality constant depending on $C_a \beta_S
\gamma_a^{-1}$. Dropping the positive displacement and pressure
left-hand side terms, multiplying both sides by $\tau$, and using the
symmetry of $c$ together with the inequality
\eqref{eq:inequality:symmetric-bilinear-form} give
\begin{equation*}
  \norw{e_z^m}{c}^2 - \norw{e_z^{m-1}}{c}^2
  \ls \tau \norw{\partial_{\tau} \rho_{z}^{m}}{c} \norw{e_z^m}{c} + \tau \norm{R^m}^2.
\end{equation*}

Let $M$ be the index where $\norw{e_z^m}{c}^2$ achieves its maximum
for $1 \leq m \leq N$. Summing the above from $m=1$ to $m=M$, using
the maximality of $\norw{e_z^M}{c}$, multiplying both sides by $\tau$
and re-arranging yields
\begin{equation}
  \label{eq:z-err-est:pre-bound:a}
  \tau \norw{e_z^M}{c}^2
  \ls \tau \norw{e_z^0}{c}^2
  + \tau \left(\sum\limits_{m=1}^{M} %
  \tau \norw{\partial_{\tau}\rho_{z}^m}{c}\right) \norw{e_z^M}{c} + %
  \sum\limits_{m=1}^{M} \left ( \tau \norm{R^m} \right )^2.
\end{equation}
By the fundamental theorem of calculus, we have 
\begin{equation*}
   \tau \left ( \norw{\partial_{\tau}\rho_z^m}{c} \right ) = %
   \norw{\rho_z^m - \rho_z^{m-1}}{c} %
   \leq \int_{t^{m-1}}^{t^m} \norw{\rho_{\partial_t z}}{c} \ds. 
\end{equation*}
Applying H{\"o}lder's inequality on the right-most term, above, gives
\begin{equation*}
\int_{t^{m-1}}^{t^m} \norw{\rho_{\partial_t z}}{c} \leq 
\left(\int_{t^{m-1}}^{t^m} 1\, dt\right)^{1/2}%
\left( \int_{t^{m-1}}^{t^m} \norw{\rho_{\partial_t z}}{c}^2\right)^{1/2} 
\end{equation*}
so that 
\begin{equation}\label{eq:z-err-est:pre-bound:b}
\tau \left ( \norw{\partial_{\tau}\rho_z^m}{c} \right ) \leq 
\tau^{1/2}\left( \int_{t^{m-1}}^{t^m} \norw{\rho_{\partial_t z}}{c}^2\right)^{1/2}. 
\end{equation}
Inserting \eqref{eq:z-err-est:pre-bound:b} and
\eqref{eq:R:bound} into
\eqref{eq:z-err-est:pre-bound:a}, using Young's inequality on the
second term on the right-hand side and rearranging yields
\begin{align}
  \tau \norw{e_z^M}{c}^2 &\ls
  \tau \norw{e_z^0}{c}^2 + %
  \tau\left(\tau\sum\limits_{m=1}^{M}\norw{\partial_{\tau}\rho_{z}^m}{c}\right)^2 
  + \sum\limits_{m=1}^{M} \left ( \tau \norm{R^m} \right )^2, \nonumber\\ 
  & \ls \tau \norw{e_z^0}{c}^2 + %
  \tau^2 \int_{0}^{t_M} \norw{\rho_{\partial_t z}}{c}^2 \ds + %
  \left( C_{\tau}^M \right)^2
  \label{eq:z-err-est:pre-bound:first-term-final}
\end{align}
%
%
\noindent\textbf{Estimate of $\tau^2\norw{\div e_z^m}{}$}:\\ Now we
estimate the second, and final, term in the flux norm
\eqref{eq:W-norm}. Let $K$ denote the index where
$\norm{\div e_z^K}$ is maximal.  Using
Definition~\ref{defn:minimal-stokes-biot-stability}(iii), and
selecting $q = \tau\div e_z^K$ in the error equation
\eqref{eq:a-priori:full-model:err-eq:c} for $m = K$ yields
\begin{equation*}
  \inner{\div (e_u^K - e_u^{K-1})}{\div e_z^K}
  + \tau \inner{\div e_z^K}{\div e_z^K}
  - \inner{c_0 (e_p^K - e_p^{K-1})}{\div e_z^K}
  = \tau \inner{R^K}{\div e_z^K}.
\end{equation*}
Thus, re-arranging terms, using Cauchy-Schwarz and the triangle
inequalities, and dividing by $\norm{\div e_z^K}$ gives
\begin{equation*}
  \begin{split}
    \tau \norm{\div e_z^K}
    &\ls \norw{e_u^K}{1} + \norw{e_u^{K-1}}{1} + 
    c_0 \norm{e_p^K} + c_0 \norm{e_p^{K-1}} + \tau \norm{R^K}\\
    &\ls \norw{e_u^J}{1} + \tau \norm{R^K}, 
  \end{split}
\end{equation*}
where the last inequality follows from the majorization of the terms
$e_u^K$, $e_p^K$, $e_u^{K-1}$, $e_p^{K-1}$ by the maximum $e_u^J =
\max\limits_{j=1,2,\dots,N}e_u^{j}$
and~inequality \eqref{eq:err-est:ep-bounded-by-eu}. Noting that
\begin{equation*}
  \left(\norw{e_u^J}{1} + \tau \norm{R^K} \right)^{2} \ls %
  \norw{e_u^J}{1}^2 + \tau^2 \norm{R^K}^2 \ls %
  \norw{e_u^J}{1}^2 + \sum_{m=1}^{K} \tau^2 \norm{R^m}^2, %
\end{equation*}
and employing \eqref{eq:u-err-est:pre-bound:final}, 
\eqref{eq:R:bound} and taking $I=\max\left\{J,K\right\}$ then 
gives
\begin{equation}
  \tau^2 \norm{\div e_z^K}^2
  \ls
  \norw{e_u^0}{1}^2 + \norw{e_p^0}{d}^2 +
  \int_{0}^{t_I} \norw{\rho_z}{c}^2 \ds + \left ( C_{\tau}^I \right)^2 .
  \label{eq:z-err-est:pre-bound:second-term-final}   
\end{equation} 
  
Finally, to establish \eqref{eq:prop:disc-errors}, combine the
definition of the weighted flux norm \eqref{eq:W-norm},
\eqref{eq:u-err-est:pre-bound:final},
\eqref{eq:err-est:ep-bounded-by-eu}
\eqref{eq:z-err-est:pre-bound:first-term-final}, and %
\eqref{eq:z-err-est:pre-bound:second-term-final} and use the fact that
the integral from $0$ to $T$ majorizes all of the time-integral
right-hand sides of the summed expressions. 
\end{proof}


\subsection{Convergence estimates}%
\label{sec:a-priori:full-model:conv-estimates}
To specialize the general results of
Proposition~\ref{prop:disc-errors} we will first suppose the exact
solutions to \eqref{eq:biot:var:a}-
suitable regularity assumptions.  Moreover, we assume the
interpolants, discussed in \ref{sec:a-priori:full-model:interpolants},
satisfy approximation inequalities of a certain order.  Towards that
end let $U_h \times W_h \times Q_h$ satisfy the assumptions of
Definition \ref{defn:minimal-stokes-biot-stability}.  %
For a reflexive Banach space $X$, a time interval $(a, b) \subseteq \R$ and
a measurable $f: (a,b) \rightarrow X$ we define the canonical
space-time norm \cite{evans10}
\begin{equation}
  \label{eq:standard-space-time-norm}
  \norw{f}{L^p(a,b;X)} = \left( \int_{a}^{b} \norw{f(s)}{X} \ds \right)^{1/p}. 
\end{equation}
As in the case of spatial derivatives, the usual Sobolev notation $f
\in H^r(a, b; X)$ means that $f \in L^2(a, b; X)$ and that $\partial_t
f$, $\partial^2_t f$, $\dots$, $\partial^r_t f$ are also in $L^2(a, b;
X)$. In the sections that follow we will sometimes use
  the abbreviations $\norw{f}{L^2 X}$ or $\norw{f}{H^r X}$ to signify
  \eqref{eq:standard-space-time-norm}.

\begin{proposition}\label{cor:conv-est-specialized}
Suppose the assumptions of Proposition~\ref{prop:disc-errors} hold.  %
Let $k \geq 0$ be the greatest integer such that the orthogonal projection, 
$\Pi_{Q_h}: Q \rightarrow Q_h$, satisfies \eqref{eq:proj-pressure-ineq}.  
Suppose $r \geq 1$ is the maximal integer such that $P_r$, the space 
of continuous Lagrange polynomials of order $r$, is contained in $U_h$; 
suppose an interpolation, from $W$ to $W_h$, satisfying either 
\eqref{eq:proj-commut-div-flux-ineq} or 
\eqref{eq:proj-commut-div-flux-ineq-lagrange} exists and let $s > 0$ be the 
maximal integer satisfying the respective inequality. Suppose that the exact 
solutions to 
\eqref{eq:biot:var:a}-\eqref{eq:biot:var:c} satisfy the regularity 
assumptions
\begin{equation*}
\begin{array}{lll}
u(t) \in L^{\infty}((0,T];H^{r+1}\cap U) %
& \partial_t u\in L^{1}((0,T];H^{r+1}\cap U) %
& \partial_{tt}u\in L^{1}((0,T];H^1))\\
\multicolumn{2}{l}{z(t) \in  %
L^{\infty}((0,T];H^{s+1}\cap W)\cap L^{\infty}((0,T];H^{s+1}_{\kappa^{-1}}\cap W)} %
& \partial_t z \in L^2((0,T];H^{s+1}_{\kappa^{-1}}\cap W)\\
\multicolumn{2}{l}{p(t) \in L^{\infty}((0,T];H^{k+1}\cap Q)} %
& \partial_{tt}p \in L^1((0,T];L^1),
\end{array}
\end{equation*}
and that the initial iterates, $(u_h^0,z_h^0,p_h^0)$, satisfy the estimates  
\begin{align} 
\norw{u(0)-u_h^0}{1} & + \tau^{1/2} \norw{z(0)-z_h^0}{c} + \norw{p(0)-p_h^0}{d}
\label{eqn:initial-data-approx} \\
\ls &\,\, h^{r}\norw{u(0)}{H^{r +1}} + 
\tau^{1/2}h^{s+1}\norw{z(0)}{\kappa^{-1}H^{s + 1}} + 
h^{k+1}\norw{p(0)}{H^{k + 1}},\nonumber
\end{align}
consistent with the projections of %
section~\ref{sec:a-priori:full-model:interpolants}.  %
Then for $c = \min\{k,r,s\}$ we have
\begin{equation}\label{eq:conv-est}
\norw{u^m-u_h^m}{1} + \Wnorm{z^m - z_h^m} + \norm{p^m - p_h^m} \ls h^c M_1 + \tau M_2
\end{equation} 
where $M_1$ and $M_2$ are given by 
\begin{align*}
M_1 &= h^{r-c}\left(\norw{\partial_t u}{L^1H^{r+1}} + %
\norw{u}{L^{\infty}H^{r+1}}\right) + %
h^{s-c}\left(\norw{z}{L^1H^{s+1}}\right.  \\
& \quad + \left. %
(h+\tau^{1/2})\norw{z}{L^{\infty}H^{s+1}_{\kappa^{-1}}} \right)%
+ h^{k-c}\norw{p}{L^{\infty}H^{k+1}}\\
M_2 &= c_0 \norw{\partial_{tt} p}{L^1L^1} + %
\norw{\partial_{tt}u}{L^1H^1} + %
h^{s+1}\norw{\partial_t z}{L^2H^{s+1}_{\kappa^{-1}}} + %
h^s\norw{z}{L^{\infty}H^{s+1}} 
\end{align*}
\end{proposition}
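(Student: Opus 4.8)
The plan is to specialize the abstract bound \eqref{eq:prop:disc-errors} of Proposition~\ref{prop:disc-errors} by feeding in the concrete approximation properties of the projections introduced in Section~\ref{sec:a-priori:full-model:interpolants} and then tallying powers of $h$ and $\tau$. First I would split every full error into its interpolation and approximation parts via the decomposition \eqref{eq:err-analy-decomp} and the triangle inequality,
\[
\norw{u^m - u_h^m}{1} + \Wnorm{z^m - z_h^m} + \norm{p^m - p_h^m}
\le \norw{\rho_u^m}{1} + \Wnorm{\rho_z^m} + \norm{\rho_p^m} + \norw{e_u^m}{1} + \Wnorm{e_z^m} + \norm{e_p^m},
\]
and bound the $e$-terms by Proposition~\ref{prop:disc-errors}. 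This reduces everything to estimating (a) the interpolation errors $\rho_u,\rho_z,\rho_p$ and their first time derivatives, (b) $\norm{\div\rho_z}$, (c) the quantity $C_\tau^T$, and (d) the initial discrete errors $e_u^0,e_z^0,e_p^0$.

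Next I would estimate each of these with the concrete projection bounds. For the pressure, since $\Pi_{Q_h}$ is the $L^2$-projection, \eqref{eq:proj-pressure-ineq} gives $\norm{\rho_p}\ls h^{k+1}\norw{p}{k+1}$, with the same bound for $\rho_{\partial_t p}$ after commuting $\Pi_{Q_h}$ with $\partial_t$. For the flux I invoke whichever of \eqref{eq:proj-commut-div-flux-ineq} or \eqref{eq:proj-commut-div-flux-ineq-lagrange} holds, obtaining $\norm{\rho_z}\ls h^{s+1}\norw{z}{s+1}$ and $\norm{\div\rho_z}\ls h^{s+1}\norw{\div z}{s+1}$ (in the Lagrange case $\norm{\div\rho_z}\le\norw{\rho_z}{1}\ls h^s\norw{z}{s+1}$), again commuting with $\partial_t$ for $\rho_{\partial_t z}$. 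Because $\norw{\cdot}{c}=\kappa^{-1/2}\norm{\cdot}$, the terms $\big(\int_0^T\norw{\rho_z}{c}^2\,\ds\big)^{1/2}$ and the remaining $\rho_z$ time-derivative term in \eqref{eq:prop:disc-errors} become $\ls h^{s+1}\norw{z}{L^2H^{s+1}_{\kappa^{-1}}}$ and $\ls \tau h^{s+1}\norw{\partial_t z}{L^2H^{s+1}_{\kappa^{-1}}}$: the factor $\kappa^{-1/2}$ is absorbed into the $\kappa^{-1}$-weighted flux regularity, leaving no negative power of $\kappa$ out front, which is the whole point of the argument. Likewise $\Wnorm{\rho_z}\ls \tau^{1/2}h^{s+1}\norw{z}{H^{s+1}_{\kappa^{-1}}} + \tau h^{s+1}\norw{\div z}{s+1}$, which is where the factor $(h+\tau^{1/2})$ multiplying $\norw{z}{L^\infty H^{s+1}_{\kappa^{-1}}}$ originates. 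For the displacement I use the modified elliptic projection bound \eqref{eq:proj-ellip-disp-ineq}, $\norw{\rho_u}{1}\ls h^{r}\norw{u}{r+1}+h^{k+1}\norw{p}{k+1}$, with the analogous bound for $\rho_{\partial_t u}$ (here $k_U=r$, $k_Q=k$, $k_W=s$).

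Assembling these into $C_\tau^T$ and into \eqref{eq:prop:disc-errors}, and bounding the $L^1(0,T;\cdot)$ time integrals of $\div\rho_z$, $\rho_{\partial_t u}$ and the time-consistency terms $\tau\partial_{tt}p$, $\tau\partial_{tt}u$ against the assumed temporal regularity, produces the contributions $h^{s+1}\norw{z}{L^1H^{s+1}}$, $h^{r}\norw{\partial_t u}{L^1H^{r+1}}$, $h^{r}\norw{u}{L^\infty H^{r+1}}$, $h^{k+1}\norw{p}{L^\infty H^{k+1}}$ together with the $\tau$-scaled terms $\tau c_0\norw{\partial_{tt}p}{L^1L^1}$, $\tau\norw{\partial_{tt}u}{L^1H^1}$, $\tau h^{s+1}\norw{\partial_t z}{L^2H^{s+1}_{\kappa^{-1}}}$ and $\tau h^{s}\norw{z}{L^\infty H^{s+1}}$. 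For the initial data I write $e_u^0=u_h^0-\Pi_{U_h}u(0)$, so $\norw{e_u^0}{1}\le\norw{u(0)-u_h^0}{1}+\norw{\rho_u^0}{1}$, and similarly for $z$ and $p$; the first summands are controlled by hypothesis \eqref{eqn:initial-data-approx} and the second by the projection estimates above evaluated at $t=0$ and majorized by the $L^\infty$-in-time norms. Finally, setting $c=\min\{k,r,s\}$ and factoring $h^c$ out of every $h$-power — each residual exponent $h^{r-c},h^{s-c},h^{k-c}$ is nonnegative, and $h\le1$, $\tau\le1$ let stray extra factors of $h$ be absorbed into those residual powers — collects the estimate into the stated form $h^cM_1+\tau M_2$.

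The step requiring the most care is the $\kappa$-bookkeeping: one must check that \emph{every} occurrence of the weight $\kappa^{-1/2}$ coming from the bilinear form $c$ — inside $C_\tau^T$, in the flux-norm pieces of $\Wnorm{\rho_z}$ and of $\Wnorm{e_z^m}$, and in the initial data bound — is paired with a factor supplied by the $\kappa^{-1}$-weighted flux regularity of $z$ or $\partial_t z$, so that the final inequality constant and the convergence rates are genuinely uniform as $\kappa\to0$ (and, in parallel, uniform for every fixed $0\le c_0<1$). Everything else — the maximality arguments, Lemma~\ref{lemma:lee}, and the time-truncation estimates for $R^m$ — is already packaged inside Proposition~\ref{prop:disc-errors}, so the remaining work is the (careful but otherwise routine) substitution of approximation orders.
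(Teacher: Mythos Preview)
Your proposal is correct and follows essentially the same route as the paper: split each error via the decomposition \eqref{eq:err-analy-decomp} and the triangle inequality, invoke Proposition~\ref{prop:disc-errors} for the $e$-parts, bound the initial discrete errors by the triangle inequality together with hypothesis \eqref{eqn:initial-data-approx} and the projection estimates at $t=0$, and then insert the approximation bounds \eqref{eq:proj-pressure-ineq}, \eqref{eq:proj-commut-div-flux-ineq}/\eqref{eq:proj-commut-div-flux-ineq-lagrange}, \eqref{eq:proj-ellip-disp-ineq} to collect powers of $h$ and $\tau$. Your write-up is in fact more explicit than the paper's terse proof about the provenance of each term in $M_1$ and $M_2$ (e.g.\ the $(h+\tau^{1/2})$ factor and the $\kappa$-bookkeeping), but the argument is the same.
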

\begin{proof}
First, note that since $\Pi_{Q_h}$ satisfies \eqref{eq:proj-pressure-ineq} 
and since $P_r \subset U_h$ then, according to the argument directly preceding 
\eqref{eq:proj-ellip-disp-ineq}, the inequality 
\eqref{eq:proj-ellip-disp-ineq} holds.  Using the triangle inequality, 
\begin{align*}
\norw{e_u^0}{1} + \tau^{1/2} \norw{e_z^0}{c} + \norw{e_p^0}{d} & \leq 
\norw{u(0)-u_h^0}{1} + \tau^{1/2}\norw{z(0)-z_h^0}{c} \\ 
& \quad + \norw{p(0)-p_h^0}{d} 
+ \norw{\rho_u^0}{1} + \norw{\rho_z^0}{c} + \norw{\rho_p^0}{d}, 
\end{align*}
along with \eqref{eqn:initial-data-approx} and the projection estimates of 
section \ref{sec:a-priori:full-model:interpolants}, applied to the last three terms 
above, gives 
\begin{equation}\label{eqn:initial-data-approx:disc-errors}
\norw{e_u^0}{1} + \tau^{1/2} \norw{e_z^0}{c} + \norw{e_p^0}{d} \ls 
\,\, h^{r}\norw{u(0)}{H^{r +1}} + 
\tau^{1/2}h^{s+1}\norw{z(0)}{\kappa^{-1}H^{s + 1}} + h^{k+1}\norw{p(0)}{H^{k + 1}}.
\end{equation}
Then \eqref{eq:conv-est} follows from the triangle inequality, with respect to 
the error decompositions \eqref{eq:err-analy-decomp},  along with: the discrete 
error estimates \eqref{eq:prop:disc-errors}; discrete initial iterate error 
estimates \eqref{eqn:initial-data-approx:disc-errors}; and interpolation estimates \eqref{eq:proj-pressure-ineq}, %
\eqref{eq:proj-commut-div-flux-ineq}-%
\eqref{eq:proj-commut-div-flux-ineq-lagrange} and \eqref{eq:proj-ellip-disp}. 
\end{proof}

\begin{remark}
Further assumptions on the discrete spaces, beyond the minimal Stokes-Biot 
stability of Defn.~\ref{defn:minimal-stokes-biot-stability}, can lead to 
slightly different versions of Proposition \ref{cor:conv-est-specialized}.  
For instance, if $(W_h,Q_h)$ are such that the usual Raviart-Thomas type 
projection commutation relation   
\[
\left(\div z - \div\Pi_{W_h}z,q_h = 0\right),\quad \text{for all } q_h \in Q_h,
\]
holds for each $z\in W$ then $(\div \rho_z,q_h) = 0$ so that, for instance, the 
contribution $\norw{z}{L^1 H^{s+1}}$ vanishes from $M_1$; this term arises 
from $\norw{\div\rho_z}{}$ in \eqref{eq:prop:disc-errors}.  This observation 
is used in \cite{rodrigo2018new} where $W_h = RT_0$ is fixed.
\end{remark}

\section{Numerical experiments}
\label{sec:numerics}
Turning to numerical evaluation of our theoretical findings, we
investigate the stability and numerical convergence properties, for $0
< \kappa \ll 1$ and $0 \leq c_0 \leq 1$, of two mixed finite element
pairings:
\begin{enumerate}[label=(\roman*)]
\item
  $U_h \times W_h \times Q_h = \CG^2_2 \times RT_0 \times \DG_0$, and
\item
  $U_h \times W_h \times Q_h = \CG^2_2 \times \CG^2_1 \times \DG_0$
\end{enumerate}
The first discretization is a canonical choice from the original view
of conforming Stokes-Biot \cite{hu2017nonconforming,rodrigo2018new}
stability whereas the second choice is only minimally Stokes-Biot
stable. We define a manufactured, smooth exact solution set over the
unit square $\Omega = [0,1] \times[0,1]$, with coordinates $x = (x_1,
x_2) \in \Omega$, given by
\begin{equation*}
  u(t, x) = 
  \begin{pmatrix}
    t\sin(\pi x_1) \sin(\pi x_2) \\
    2 t \sin(3 \pi x_1) \sin(4\pi x_2)
  \end{pmatrix},
  \quad
  p(t, x) = (t+1) \left (
  \left( (x_1 - 1) x_1 (x_2 - 1) x_2 \right)^2 - \frac{1}{900}
  \right).
\end{equation*}
for $t \in (0, T)$, $T = 1.0$.
These solutions satisfy the homogeneous boundary conditions
$u_{|\partial \Omega} = 0$ and $z_{|\partial\Omega}\cdot n = 0$ where
$n$ is the outward boundary normal to the unit square. By construction
$p(t, \cdot) \in L^2_0(\Omega)$ for each $t$.
For each discretization we consider three parameter scenarios:
vanishing storage ($c_0 = 0$), fixed storage ($c_0 = 1$) and
diminishing hydraulic conductivity ($\kappa \rightarrow 0$), and fixed
hydraulic conductivity ($\kappa = 1.0$) and vanishing storage ($c_0
\rightarrow 0)$. For simplicity, we here consider unit Lam\'e
parameters: $\mu = \lambda = 1.0$. We let the time step size $\Delta t
= T = 1.0$ as the test case is linear in time. For
solving~\eqref{eq:biot:var} numerically, we used the FEniCS finite
element software suite~\cite{fenics-one}. The zero average-value
condition on the pressure is enforced via a single real Lagrange
multiplier. Linear systems were solved using MUMPS.

In Sections ~\ref{sec:numerics:Stokes-Biot-Standard} and %
\ref{sec:numerics:Stokes-Biot-Minimal} we examine the numerical errors for a 
fixed, minimally Stokes-Biot stable discretization on a series of uniform meshes, 
$\triang_h$, with mesh size $h$.  Each of the error tables in 
Sections~\ref{sec:numerics:Stokes-Biot-Standard} and \ref{sec:numerics:Stokes-Biot-Minimal} 
follow the same general format.  In general, the relative displacement errors 
$\|\tilde u(T) - u_h(T)\|_1/\|\tilde u(T)\|_1$ are reported in the first 
set of table rows; the relative pressure errors $\| \tilde p(T) - p_h(T) \|/\| \tilde p(T) \|$ 
appear in the second set of table rows; and the relative flux errors 
$\Wnorm{\tilde z(T) - z_h(T)}/\Wnorm{\tilde z(T)}$ appear in the final set of 
table rows.  The last column of each table (`Rate') denotes the order of 
convergence using for the last two values in each row.  For each investigation, 
either $c_0$ or $\kappa$ varies while the other is fixed; the result of the variable 
parameter is reported for the values $10^{-r}$ for $r=0,4,8$ and $12$ but intermediate 
results identical to the previous case are suppressed.  For instance, if 
$\kappa=10^0$, $\kappa=10^{-4}$, and $\kappa = 10^{-8}$ all yield the same errors 
for a given quantity, then only the errors for $\kappa=10^0$ and $\kappa = 10^{-12}$ 
are reported in the corresponding row. In many instances, the displacement errors 
correspond directly to those of a previous case; in this event, we refer to the 
appropriate table. 

\subsection{Convergence of a Stokes-Biot stable pairing}
\label{sec:numerics:Stokes-Biot-Standard}
We first consider the convergence properties for the pairing $U_h
\times W_h \times Q_h = \CG^2_2(\triang_h) \times RT_0(\triang_h)
\times \DG_0(\triang_h)$.  This discretization satisfies the minimal Stokes-Biot 
stability conditions and also the Darcy stability condition when $\kappa$ is 
uniformly bounded below.  We note, however, that the Darcy stability condition fails 
to hold \cite{baerland2018uniform} uniformly as $\kappa$ tends to zero but 
that Definition~\ref{defn:minimal-stokes-biot-stability}(iii) does indeed hold 
regardless of $\kappa$.  As discussed in the previous section, we report on the relative approximation
errors for the displacement, 
pressure, 
and flux 
for a series of uniform meshes $\triang_h$ with mesh size $h$. The exact solutions
$\tilde u, \tilde p, \tilde z$ were represented by continuous
piecewise cubic interpolants in the error computations.

  \subsubsection{Vanishing storage $c_0 = 0$, varying conductivity $0 < \kappa \leq 1$}
  \label{sec:numerics:Stokes-Biot-Standard:a}
  (see: Table~\ref{tab:numerics:p2rt0p0:A}) We observe that the
  displacement error converges at the expected and optimal rate (2)
  for $\kappa$ ranging from $1$ down to $10^{-12}$. Overall, the
  displacement errors remain essentially unchanged as $c_0$ and
  $\kappa$ vary. (We therefore do not report or discuss these further
  here.) The behaviour for the flux and pressure errors is less
  regular. The flux and pressure approximation errors increase as
  $\kappa$ decreases, but seem to stabilize i.e.~not increase
  substantially further from $\kappa = 10^{-4}$ to $10^{-8}$ and to
  $10^{-12}$. Moreover, for each $\kappa$, the pressure and flux
  errors decrease with decreasing mesh size. Indeed, for $h = 1/128$,
  the pressure errors are of similar magnitude for the range of
  hydraulic conductivities ($\kappa$) tested.  For a comparison to a minimally 
  Stokes-Biot stable analogue, see Section \ref{sec:numerics:Stokes-Biot-Minimal:a} 
  and Table~\ref{tab:numerics:p2p1p0:A}. 

  \subsubsection{Fixed storage $c_0 = 1$, varying conductivity $0 < \kappa \leq 1$} 
  \label{sec:numerics:Stokes-Biot-Standard:b}
  (see: Table~\ref{tab:numerics:p2rt0p0:B}) For this case, we again
  observe that the flux and pressure approximation errors increase as
  $\kappa$ decrease, but seem to stabilize and not increase
  substantially further from $\kappa = 10^{-4}$ to $10^{-8}$ and
  $10^{-12}$. Again, for each $\kappa$, the pressure and flux errors
  decrease with decreasing mesh size and for $h = 1/128$, the pressure
  errors are nearly identical for the range of hydraulic conductivites ($\kappa$) 
  tested.  For comparison, see Section \ref{sec:numerics:Stokes-Biot-Minimal:b} 
  and Table~\ref{tab:numerics:p2p1p0:B}.
  
  \subsubsection{Fixed conductivity $\kappa = 1$, varying storage $0 \leq c_0 \leq 1$} 
  \label{sec:numerics:Stokes-Biot-Standard:c}
  (see: Table~\ref{tab:numerics:p2rt0p0:C}) For this case, we
  observe nearly uniform behaviour as $c_0$ decreases. The pressure
  and flux errors are similar for the range of storage coefficients ($c_0$) considered, and
  converge at the optimal and expected rate (1). For comparison, see Section 
  \ref{sec:numerics:Stokes-Biot-Minimal:c} and Table~\ref{tab:numerics:p2p1p0:C} 

\begin{table}
\caption{Vanishing storage coefficient $c_0 = 0$, varying conductivity
  $0 < \kappa \leq 1$ for the (minimally) Stokes-Biot stable pairing 
  $\CG^2_2(\triang_h) \times RT_0(\triang_h) \times \DG_0(\triang_h)$. 
  Relative approximation erros for the time-dependent test case given in 
  Section~\ref{sec:numerics}.  Listed are the relative displacement (top), 
  relative flux (middle) and relative pressure (bottom) errors for varying 
  $\kappa$ on a series of uniform meshes $\triang_h$ with mesh size $h$.  
  The displacement errors for $\kappa = 10^{-4}, 10^{-8}$ were identical to 
  the data presented ($\kappa =1$, $\kappa = 10^{-12}$) and are suppressed.  
  The last column `Rate' denotes the order of convergence using for the last 
  two values in each row. Compare with Table~\ref{tab:numerics:p2p1p0:A}.}
  \label{tab:numerics:p2rt0p0:A}
  \begin{center}
    \begin{tabular}{l|lllll|c}
      \toprule
      \diagbox[]{$\kappa$}{$h$} & 1/8 & 1/16 & 1/32 & 1/64 & 1/128 & Rate \\
      	\midrule
	\multicolumn{7}{c}{Displacement}\\
	\midrule
      $10^{0}$  & \num{1.64e-01} & \num{4.45e-02} & \num{1.13e-02} & \num{2.84e-03} & \num{7.11e-04} & $2.0$ \\
      $10^{-12}$ & \num{1.64e-01} & \num{4.45e-02} & \num{1.13e-02} & \num{2.84e-03} & \num{7.11e-04} & $2.0$ \\ 
      	\midrule
	\multicolumn{7}{c}{Pressure}\\
      	\midrule
      $10^{0}$  & \num{2.63e-01} & \num{1.02e-01} & \num{5.05e-02} & \num{2.53e-02} & \num{1.26e-02} & $1.0$ \\
      $10^{-4}$ & \num{1.04e02} & \num{8.12e00} & \num{5.69e-01} & \num{4.39e-02} & \num{1.28e-02} & $1.8$ \\
      $10^{-8}$ & \num{1.25e02} & \num{1.21e01} & \num{1.26e00} & \num{1.42e-01} & \num{2.07e-02} & $2.8$ \\
      $10^{-12}$ &  \num{1.25e02} & \num{1.21e01} & \num{1.26e00} & \num{1.43e-01} & \num{2.09e-02} & $2.8$ \\
      	\midrule
	\multicolumn{7}{c}{Flux}\\
	\midrule	
      $10^{0}$ &  \num{6.88e-01} & \num{1.41e-01} & \num{6.39e-02} & \num{3.18e-02} & \num{1.59e-02} & $1.0$ \\
      $10^{-4}$ & \num{3.62e02} & \num{4.38e01} & \num{5.35e00} & \num{6.46e-01} & \num{8.05e-02} & $3.0$ \\
      $10^{-8}$ & \num{4.72e02} & \num{9.91e01} & \num{2.67e01} & \num{7.01e00} & \num{1.76e00} & $2.0$ \\ 
      $10^{-12}$ &  \num{4.72e02} & \num{9.91e01} & \num{2.67e01} & \num{7.04e00} & \num{1.79e00} & $2.0$ \\
      \bottomrule
    \end{tabular}
  \end{center}
\end{table}


\begin{table}
\caption{Fixed storage coefficient $c_0 = 1$, varying conductivity $0
  < \kappa \leq 1$ for the (minimally) Stokes-Biot stable pairing 
  $\CG^2_2(\triang_h) \times RT_0(\triang_h) \times
  \DG_0(\triang_h)$.  The format follows that of Table~\ref{tab:numerics:p2rt0p0:A} 
  and the relative displacement errors are identical.  Compare with 
  Table~\ref{tab:numerics:p2p1p0:B}}
  \label{tab:numerics:p2rt0p0:B}
  \begin{center}
    \begin{tabular}{l|lllll|c}
      \toprule
      \diagbox[]{$\kappa$}{$h$} & 1/8 & 1/16 & 1/32 & 1/64 & 1/128 & Rate \\
	\midrule
	\multicolumn{7}{c}{Displacement, c.f.~Table \ref{tab:numerics:p2rt0p0:A}}\\
	\midrule
	\multicolumn{7}{c}{Pressure}\\
      	\midrule
      $10^{0}$  & \num{2.61e-01} & \num{1.02e-01} & \num{5.05e-02} & \num{2.53e-02} & \num{1.26e-02} & $1.0$ \\
      $10^{-4}$ & \num{2.41e01} & \num{1.95e00} & \num{1.43e-01} & \num{2.64e-02} & \num{1.26e-02} & $1.1$ \\
      $10^{-8}$ & \num{2.56e01} & \num{2.43e00} & \num{2.80e-01} & \num{4.17e-02} & \num{1.33e-02} & $1.7$ \\
      $10^{-12}$ & \num{2.56e01} & \num{2.43e00} & \num{2.80e-01} & \num{4.17e-02} & \num{1.33e-02} & $1.7$ \\
      	\midrule
	\multicolumn{7}{c}{Flux}\\
	\midrule
      $10^{0}$ & \num{6.86e-01} & \num{1.41e-01} & \num{6.39e-02} & \num{3.18e-02} & \num{1.59e-02} & $1.0$ \\
      $10^{-4}$ &  \num{1.03e02} & \num{1.98e01} & \num{3.91e00} & \num{5.90e-01} & \num{7.87e-02} & $2.9$ \\
      $10^{-8}$ & \num{1.07e02} & \num{2.35e01} & \num{6.56e00} & \num{1.75e00} & \num{4.46e-01} & $2.0$ \\
      $10^{-12}$ & \num{1.07e02} & \num{2.35e01} & \num{6.56e00} & \num{1.75e00} & \num{4.47e-01} & $2.0$ \\
      \bottomrule
    \end{tabular}
  \end{center}
\end{table}

\begin{table}
\caption{Fixed hydraulic conductivity $\kappa = 1$, varying storage $0
  < c_0 \leq 1$ for the (minimally) Stokes-Biot stable pairing $\CG^2_2(\triang_h) \times RT_0(\triang_h) \times
  \DG_0(\triang_h)$. The format follows that of Table~\ref{tab:numerics:p2rt0p0:A} 
  and the relative displacement errors are identical.  Compare with Table~\ref{tab:numerics:p2p1p0:C}}
  \label{tab:numerics:p2rt0p0:C}
  \begin{center}
    \begin{tabular}{l|lllll|c}
      \toprule
      \diagbox[]{$c_0$}{$h$} & 1/8 & 1/16 & 1/32 & 1/64 & 1/128 & Rate \\
	\midrule
	\multicolumn{7}{c}{Displacement, c.f.~Table \ref{tab:numerics:p2rt0p0:A}}\\
      	\midrule
	\multicolumn{7}{c}{Pressure}\\
	\midrule
      $10^{0}$  & \num{2.61e-01} & \num{1.02e-01} & \num{5.05e-02} & \num{2.53e-02} & \num{1.26e-02} & $1.0$ \\
      $10^{-12}$ & \num{2.45e-01} & \num{1.01e-01} & \num{5.05e-02} & \num{2.53e-02} & \num{1.26e-02} & $1.0$ \\
      	\midrule
	\multicolumn{7}{c}{Flux}\\
	\midrule
      $10^{0}$ & \num{6.86e-01} & \num{1.41e-01} & \num{6.39e-02} & \num{3.18e-02} & \num{1.59e-02} & $1.0$ \\
      $10^{-12}$ & \num{6.88e-01} & \num{1.41e-01} & \num{6.39e-02} & \num{3.18e-02} & \num{1.59e-02} & $1.0$ \\
      \bottomrule
    \end{tabular}
  \end{center}
\end{table}

\subsection{Convergence of a minimally Stokes-Biot stable pairing}
\label{sec:numerics:Stokes-Biot-Minimal}
We now turn to consider the convergence properties for the pairing $U_h
\times W_h \times Q_h = \CG^2_2(\triang_h) \times \CG^2_1(\triang_h)
\times \DG_0(\triang_h)$ and again report on the relative
approximation errors for the displacement, pressure and flux. This
pairing does not satisfy a Darcy stability condition, for any value of $\kappa$,  
as advanced in the original Stokes-Biot stability criteria; it does satisfy 
the minimally Stokes-Biot criterion of Definition~\ref{defn:minimal-stokes-biot-stability}.  
%
Numerical results for this minimally Stokes-Biot stable discretization, 
for the three paradigms considered in Section~\ref{sec:numerics:Stokes-Biot-Standard}, 
are presented in \ref{sec:numerics:Stokes-Biot-Minimal:a}-%
\ref{sec:numerics:Stokes-Biot-Minimal:c} alongside specific comparisons to the 
standard Stokes-Biot stable case.  

The results of this comparison supply 
computational evidence that Definition~\ref{defn:original-stokes-biot-stability}(iii) can be
replaced by Definition~\ref{defn:minimal-stokes-biot-stability}(iii)
while retaining the convergence properties first observed in
\cite{hong2017parameter,rodrigo2018new}.  Since the Darcy stability of 
Definition~\ref{defn:original-stokes-biot-stability}(iii) is not satisfied \cite{baerland2018uniform} 
uniformly in $\kappa$, our observations strongly suggest that the minimal Stokes-Biot 
stability assumptions, specifically Definition~\ref{defn:minimal-stokes-biot-stability}(iii), 
are in fact, the key component for discretizations that retain their convergence 
properties as $\kappa$ tends to zero.

  \subsubsection{Vanishing storage $c_0 = 0$, varying conductivity $0 < \kappa \leq 1$} 
  \label{sec:numerics:Stokes-Biot-Minimal:a}
  (see: Table~\ref{tab:numerics:p2p1p0:A}) Comparing
  Table~\ref{tab:numerics:p2p1p0:A} with
  Table~\ref{tab:numerics:p2rt0p0:A}, we observe that the performance
  of the two element pairings is almost surprisingly similar. Again,
  the displacement converges at the optimal and expected rate (2), the
  pressure and flux errors increase with decreasing $\kappa$, but
  stabilize, and converge with decreasing mesh size. We further
  observe that the relative errors for the flux for this element
  pairing is smaller than for the $\CG^2_2 \times RT_0 \times \DG_0$
  case (bottom rows).  For a comparison to a discretization satisfying Darcy 
  stability (though not uniformly in $\kappa$) see Section 
  \ref{sec:numerics:Stokes-Biot-Standard:a} and Table~\ref{tab:numerics:p2rt0p0:A}.

  \subsubsection{Fixed storage $c_0 = 1$, varying conductivity $0 < \kappa \leq 1$} 
  \label{sec:numerics:Stokes-Biot-Minimal:b}
  (see: Table~\ref{tab:numerics:p2p1p0:B}) Comparing
  Table~\ref{tab:numerics:p2p1p0:B} with
  Table~\ref{tab:numerics:p2rt0p0:B}, we again observe highly
  comparable performance. The observations made for the $\CG^2_2 \times
  RT_0 \times \DG_0$ case thus also apply for $\CG^2_2 \times \CG^2_1
  \times \DG_0$.  For comparison, see Section 
  \ref{sec:numerics:Stokes-Biot-Standard:b} and Table~\ref{tab:numerics:p2rt0p0:B}.

  \subsubsection{Fixed conductivity $\kappa = 1$, varying storage $0 \leq c_0 \leq 1$} 
  \label{sec:numerics:Stokes-Biot-Minimal:c}
  (see: Table~\ref{tab:numerics:p2p1p0:C}) For this case, we
  observe similar convergence rates as e.g.~in
  Table~\ref{tab:numerics:p2rt0p0:C}). The pressure error increases
  very moderately with decreasing $c_0$ (it doubles as $c_0$ is
  reduced by 12 orders of magnitude), but both the pressure and flux
  converges at the optimal and expected rate (1). For comparison, see Section 
  \ref{sec:numerics:Stokes-Biot-Standard:c} and Table~\ref{tab:numerics:p2rt0p0:C}.

\begin{table}[h]
\caption{Vanishing storage coefficient $c_0 = 0$, varying conductivity
  $0 < \kappa \leq 1$ for the minimally Stokes-Biot stable pairing
  $\CG^2_2(\triang_h) \times \CG^2_1(\triang_h) \times \DG_0(\triang_h)$.
  Listed are the relative displacement (top), relative flux (middle) and relative 
  pressure (bottom) errors for varying $\kappa$ on a series of uniform meshes 
  $\triang_h$ with mesh size $h$.  The displacement errors for 
  $\kappa = 10^{-4}, 10^{-8}$ were identical to the data presented 
  ($\kappa =1$, $\kappa = 10^{-12}$) and are suppressed.  The last column 
  `Rate' denotes the order of convergence using for the last two values in 
  each row.  Compare with Table~\ref{tab:numerics:p2rt0p0:A} }
  \label{tab:numerics:p2p1p0:A}
  \begin{center}
    \begin{tabular}{l|lllll|c}
      \toprule
      \diagbox[]{$\kappa$}{$h$} & 1/8 & 1/16 & 1/32 & 1/64 & 1/128 & Rate \\
      	\midrule
	\multicolumn{7}{c}{Displacement}\\
	\midrule
      $10^{0}$  &  \num{1.64e-01} & \num{4.45e-02} & \num{1.13e-02} & \num{2.84e-03} & \num{7.14e-04} & $2.0$ \\
      $10^{-12}$ & \num{1.64e-01} & \num{4.45e-02} & \num{1.13e-02} & \num{2.84e-03} & \num{7.11e-04} & $2.0$ \\
      	\midrule
	\multicolumn{7}{c}{Pressure}\\
	\midrule
      $10^{0}$  & \num{7.83e01} & \num{1.35e01} & \num{5.30e00} & \num{2.64e00} & \num{1.34e00} & $1.0$ \\
      $10^{-4}$ &  \num{1.22e02} & \num{1.16e01} & \num{1.21e00} & \num{1.39e-01} & \num{2.07e-02} & $2.7$ \\
      $10^{-8}$ & \num{1.25e02} & \num{1.21e01} & \num{1.26e00} & \num{1.43e-01} & \num{2.09e-02} & $2.8$ \\
      $10^{-12}$ & \num{1.25e02} & \num{1.21e01} & \num{1.26e00} & \num{1.43e-01} & \num{2.09e-02} & $2.8$ \\
      	\midrule
	\multicolumn{7}{c}{Flux}\\
	\midrule
      $10^{0}$ &  \num{6.11e-01} & \num{1.51e-01} & \num{7.23e-02} & \num{3.62e-02} & \num{1.81e-02} & $1.0$ \\
      $10^{-4}$ & \num{1.39e02} & \num{1.52e01} & \num{1.54e00} & \num{1.27e-01} & \num{9.42e-03} & $3.7$ \\
      $10^{-8}$ & \num{1.45e02} & \num{1.76e01} & \num{2.14e00} & \num{2.43e-01} & \num{2.87e-02} & $3.1$ \\
      $10^{-12}$ &  \num{1.45e02} & \num{1.76e01} & \num{2.14e00} & \num{2.43e-01} & \num{2.87e-02} & $3.1$ \\
      \bottomrule
    \end{tabular}
  \end{center}
\end{table}

\begin{table}
\caption{Fixed storage coefficient $c_0 = 1$, varying conductivity $0
  < \kappa \leq 1$ for the minimally Stokes-Biot stable pairing 
  $\CG^2_1(\triang_h) \times \DG_0(\triang_h)$.  The format follows that of 
  Table~\ref{tab:numerics:p2p1p0:A} and the relative displacement errors are 
  identical. Compare with Table~\ref{tab:numerics:p2rt0p0:B}.}
  \label{tab:numerics:p2p1p0:B}
  \begin{center}
    \begin{tabular}{l|lllll|c}
      \toprule
      \diagbox[]{$\kappa$}{$h$} & 1/8 & 1/16 & 1/32 & 1/64 & 1/128 & Rate \\
	\midrule
	\multicolumn{7}{c}{Displacement, c.f.~Table \ref{tab:numerics:p2p1p0:A}}\\
	\midrule
	\multicolumn{7}{c}{Pressure}\\
      	\midrule
      $10^{0}$   &  \num{1.55e01} & \num{3.15e00} & \num{1.33e00} & \num{6.67e-01} & \num{3.36e-01} & $1.0$ \\
      $10^{-4}$  & \num{2.54e01} & \num{2.41e00} & \num{2.78e-01} & \num{4.16e-02} & \num{1.33e-02} & $1.6$ \\
      $10^{-8}$  & \num{2.56e01} & \num{2.43e00} & \num{2.80e-01} & \num{4.17e-02} & \num{1.33e-02} & $1.7$ \\
      $10^{-12}$ & \num{2.56e01} & \num{2.43e00} & \num{2.80e-01} & \num{4.17e-02} & \num{1.33e-02} & $1.7$ \\
      	\midrule
	\multicolumn{7}{c}{Flux}\\
	\midrule
      $10^{0}$   & \num{5.97e-01} & \num{1.49e-01} & \num{7.21e-02} & \num{3.61e-02} & \num{1.81e-02} & $1.0$ \\
      $10^{-4}$  & \num{3.14e01} & \num{3.42e00} & \num{3.69e-01} & \num{3.40e-02} & \num{3.26e-03} & $3.4$ \\
      $10^{-8}$  & \num{3.16e01} & \num{3.49e00} & \num{3.90e-01} & \num{3.98e-02} & \num{4.33e-03} & $3.2$ \\
      $10^{-12}$ & \num{3.16e01} & \num{3.49e00} & \num{3.90e-01} & \num{3.98e-02} & \num{4.33e-03} & $3.2$ \\
      \bottomrule
    \end{tabular}
  \end{center}
\end{table}
\begin{table}
  \caption{Fixed hydraulic conductivity $\kappa = 1$, varying storage
    $0 < c_0 \leq 1$ for the minimally Stokes-Biot stable pairing 
    $\CG^2_2(\triang_h) \times \CG^2_1(\triang_h) \times \DG_0(\triang_h)$.  
    Compare with Table~\ref{tab:numerics:p2rt0p0:C}.}
  \label{tab:numerics:p2p1p0:C}
  \begin{center}
    \begin{tabular}{l|lllll|c}
      \toprule
      \diagbox[]{$c_0$}{$h$} & 1/8 & 1/16 & 1/32 & 1/64 & 1/128 & Rate \\
	\midrule
	\multicolumn{7}{c}{Displacement, c.f.~Table \ref{tab:numerics:p2p1p0:A}}\\
      	\midrule
	\multicolumn{7}{c}{Pressure}\\
	\midrule
      $10^{0}$  &  \num{1.55e01} & \num{3.15e00} & \num{1.33e00} & \num{6.67e-01} & \num{3.36e-01} & $1.0$ \\
      $10^{-12}$ & \num{7.83e01} & \num{1.35e01} & \num{5.30e00} & \num{2.64e00} & \num{1.34e00} & $1.0$ \\ 
	\midrule
	\multicolumn{7}{c}{Flux}\\
	\midrule
      $10^{0}$ & \num{5.97e-01} & \num{1.49e-01} & \num{7.21e-02} & \num{3.61e-02} & \num{1.81e-02} & $1.0$ \\
      $10^{-12}$ &  \num{6.11e-01} & \num{1.51e-01} & \num{7.23e-02} & \num{3.62e-02} & \num{1.81e-02} & $1.0$ \\
      \bottomrule
    \end{tabular}
  \end{center}
\end{table}

\section{Conclusion}
\label{sec:concluding-remarks}
The important concept of Stokes-Biot stability, introduced independently by 
\cite{hu2017nonconforming,lee2018,lotfian2018,rodrigo2018new}, has proven
a practical key to the selection of conforming Euler-Galerkin
discretizations of Biot's equations \eqref{eq:biot} that retain their convergence 
properties as the hydraulic conductivity ($0<\kappa$) becomes arbitrarily small.  
The novel contributions of this manuscript are primarily theoretical in nature; 
we have shown that the Stokes-Biot stability perspective can be, formally, relaxed 
and we have introduced the notion of a minimally Stokes-Biot stable Euler-Galerkin 
discretization.  The stability of minimally Stokes-Biot stable schemes is 
independent of both $c_0$ and $\kappa$ (c.f.~ Section~\ref{sec:euler-galerkin:well-posed:subsec:proof}, 
\cite[Theorem 1]{hu2017nonconforming} and 
\cite[Theorem 3.2, Case I]{hong2017parameter}), and we have presented a convergence 
analysis in this context.  

In particular, we differ from previous authors \cite{rodrigo2018new} by carrying 
out our convergence analysis without the use of a Galerkin projection based on 
the Darcy problem.  In doing so, we are able to depart from both the Darcy stability 
assumption, in general, and any questions regarding the appropriate norms for 
uniform-in-$\kappa$ Darcy stability.  In fact, an analysis based on a 
uniform-in-$\kappa$ Darcy stability assumption should take into account 
a pressure-space norm exhibiting one of the forms discussed in 
Section~\ref{subsec:stokes-biot-stability-criteria:darcy-stability-condition}; 
namely $L^2 + \kappa^{1/2} H^1$, $\kappa^{1/2}L^2$ or $\kappa^{1/2}H^1$.  Each 
of these pressure norms have related difficulties over the usual pressure $L^2$ norm 
used here.  First, it is not entirely clear to the authors that the 
$L^2+\kappa^{1/2} H^1$ norm can be treated with the otherwise-standard arguments 
presented here and in related \cite{hong2017parameter,hu2017nonconforming,rodrigo2018new,lee2018} work.  %
Second, the $\kappa^{1/2}L^2$ and $\kappa^{1/2}H^1$ weightings both degenerate 
as $\kappa$ becomes small and must be balanced by an appropriate 
displacement norm so that Stokes stability (Definition~\ref{defn:original-stokes-biot-stability}(i)--(ii) 
and Definition~\ref{defn:minimal-stokes-biot-stability}(i)--(ii)) holds uniformly 
in $\kappa$ as well.  Conversely, our arguments bring together many standard techniques and makes definitevely 
clear, by abdicating Darcy stability, that one need not consider any $\kappa$-weighted norms 
for the pressure space in order to ensure stability and approximation as $\kappa$ diminishes.
Moreover, the convergence analysis presented here is the first instance, of which we are aware, 
of an analysis carried out in the context of the full norm used to prove the Banach-Ne\u{c}as-Babu\u{s}ka 
stability (c.f.~ Section~\ref{sec:euler-galerkin:well-posed:subsec:proof}) of the 
Euler-Galerkin discretization  \eqref{eq:biot:full-var-disc}.  Thus, as neither 
the current convergence analysis, nor the previously-established BNB stability 
result, rely on a Darcy stability assumption, Proposition~\ref{cor:conv-est-specialized} 
solidifies, and generalizes, previous convergence estimates \cite{rodrigo2018new}.  
The concept of minimal Stokes-Biot stability therefore broadens the original view 
of Stokes-Biot stability to include alternative spaces that may not be Darcy stable; 
even for a fixed choice of $\kappa$.

\subsection*{Further observations and practical considerations}

The primary contribution of the current work is theoretical in nature; we have, 
in practice, removed the Darcy restriction for Stokes-Biot stability and demonstrated 
an alternative convergence analysis in this context. Nevertheless, 
practical questions regarding the suitability of both Stokes-Biot and minimally 
Stokes-Biot stable approaches, solving \eqref{eq:biot:var}, can be asked.  In 
particular, we now briefly discuss: the drawbacks of Stokes-Biot and minimally 
Stokes-Biot stable discretizations; what computational advantages, if any, are 
granted by the minimal Stokes-Biot perspective; and alternatives to the boundary 
conditions \eqref{eq:bcs}.

Both Stokes-Biot and minimally Stokes-Biot stables are not without their drawbacks.  
The requirements of both definition~\ref{defn:original-stokes-biot-stability} 
and definition~\ref{defn:minimal-stokes-biot-stability} are general; however, in 
practice, both approaches typically make use of discontinuous pressures. This 
theme is present in the literature for both conformal and non-conformal discretizations.  
In practice, the need of a discontinuous pressure space imposes restrictions on 
the choice of elements.  In this manuscript we have used 
$P_2^d \times RT_0 \times DG_0$ and $P_2^d \times P_1^d \times DG_0$ 
discretizations to illustrate a simple comparison (c.f.~Section \ref{sec:numerics}) 
via numerical experiments in 2D.  In two dimensions, as we discussed in 
Section~\ref{sec:euler-galerkin:well-posed}, one could also consider pairings 
of Scott-Vogelius type, i.e. $P_k^d \times RT_m \times DG_{k-1}$ or 
$P_k^d \times P_m^d \times DG_{k-1}$ where $k \geq 4$.  In the context of minimally 
Stokes-Biot stable triples, we have that the flux space degree can be chosen as  
$0<m \leq k-1$ in both cases; from the original Stokes-Biot point of view, one would 
require $m=k-1$ for the case or Raviart-Thomas elements and polynomial fluxes would 
not be admissible at all.  In 3D, one could also consider extensions of the Scott-Vogelius 
elements \cite{guzman2019}, the enriched cubic displacement element 
and piecewise constant pressures introduced by Guzm\'an and Neilan \cite{guzman2013}, 
the bubble-enriched continuous linear displacement element and piecewise constant pressures 
as in \cite{rodrigo2018new}, or the related bubble-enriched continuous quadratic elements 
with discontinuous linear pressures \cite{GUERMONDERN}; these spaces could be 
considered alongside fluxes of Raviart-Thomas, Brezzi-Douglas-Marini, and Lagrange type.  
One may also consider quadrilateral meshes by using the Stokes pairing \cite{GUERMONDERN}    
given by $Q^2_2 \times DG_1$ along with, for instance, $RT_m$ ($m=0$ or $1$), $BDM_k$ or 
$P_k^d$ ($k=1$ or $2$) fluxes. 

It is practical to note that minimally Stokes-Biot stable discretizations do 
not necessarily confer a computational advantage over those with Darcy stable (for 
fixed $\kappa$) flux-pressuring pairings when equal-order fluxes are selected.  
That is to say, for instance, that the $RT_k$ fluxes will have fewer DOFs than 
the alternative $P^d_k$ fluxes discussed in this manuscript.  However, minimal 
Stokes-Biot stability makes it clear that one can lower the order of the 
flux space without adversely impacting the stability and convergence of the method. 
One could interpret this as a form of `computational advantage' of minimal 
Stokes-Biot.  Overall, however, this is not the important point of minimal 
Stokes-Biot stability.  The important points are that: Darcy stability is not necessary; 
typical `Darcy stable pairings' satisfy the minimal Stokes-Biot 
criteria; and that approximation in both contexts yield strikingly similar results.  
Indeed, numerical experiments (Section \ref{sec:numerics}) show 
similar errors both with (Table~\ref{tab:numerics:p2rt0p0:A}--Table~\ref{tab:numerics:p2rt0p0:C}) 
and without (Table~\ref{tab:numerics:p2p1p0:A}--Table~\ref{tab:numerics:p2p1p0:C}) 
a Darcy stability assumption; even as $\kappa$ becomes very small.  Moreover, we would not expect an improvement in 
results if the norms of the flux and pressure were altered to provide for 
uniform-in-$\kappa$ Darcy stability condition (c.f.~(A) and (B) of %
Section~\ref{subsec:stokes-biot-stability-criteria:darcy-stability-condition})); 
this is due to the fact that approximation of the pressure in the  
$L^2 + \kappa^{1/2} H^1$ norm is similar to that of the $L^2$ norm while approximation 
in the other option, $\kappa^{1/2} L^2$, degrades as $\kappa$ becomes small.  Thus, 
the tenets of minimal Stokes-Biot stability (Definition~\ref{defn:minimal-stokes-biot-stability})
provide an approximation of the pressure in the most sensible norm; that is, the 
$L^2$ norm is a fortuitous choice for convergence analysis, assures the proper 
context for Stokes stability, and does not degrade as $\kappa$ becomes small.  Our 
conclusion is that one can think, instead, in terms of 
Definition~\ref{defn:minimal-stokes-biot-stability} (iii) when designing, or analyzing, 
approaches for Biot when $\kappa \rightarrow 0$.  This important point could certainly 
impact the design, or choice, of discretizations that do in fact confer a computational 
advantage of those where Darcy stability is a requirement.  

Finally, we close with a brief revisitation of the boundary conditions discussed 
in Section~\ref{sec:bcs}.  Extending \eqref{eq:bcs} to inhomogeneous data is 
not a concern; essential boundary conditions conditions can be lifted by selecting 
a particular solution and natural boundary conditions yield right-hand side terms 
that vanish in the error equations \eqref{eq:a-priori:full-model:err-eq}, and do 
not alter the stability arguments (Section~\ref{sec:euler-galerkin:well-posed}). 
However, it is valid to note that the assumption that $\Gamma_f = \Gamma_c$ and 
$\Gamma_t = \Gamma_p$, in \eqref{eq:bcs:general}, may not be practical for 
problems of interest.  The conditions \eqref{eq:bcs} were considered in  
the original Stokes-Biot, or motivating, literature \cite{hu2017nonconforming,rodrigo2018new} 
which lead to their adoption here.  The advantage of the boundary conditions \eqref{eq:bcs} is 
that they provide for an overall discussion that scopes naturally between the case where 
$\Gamma_c \neq \partial \Omega$ and $\Gamma_c = \partial\Omega$ provided that 
$|\Gamma_c| > 0$ is assumed.  In particular, as mentioned in 
Remark~\ref{rmk:clampedbd}, if $\Gamma_c = \partial\Omega$ then conditions 
\eqref{eq:bcs} imply $\Gamma_t = \emptyset$, the variational forms  \eqref{eq:biot:var} 
and \eqref{eq:biot:full-var-disc} are unchanged, and all results discussed hold 
when $Q = L_0^2(\Omega)$ is selected, instead, in \eqref{eq:biot:spaces}.   

It is reasonable to ask what other boundary condition configurations can be considered, 
and under what conditions.  First, we note that the requirement that 
$\Gamma_c \cap \Gamma_t = \emptyset$ and $\Gamma_f \cap \Gamma_p = \emptyset$ arise 
from the early work in well posedness for Biot \cite{showalter-2000}.  Moreover, 
the requirement that the positive measure of the clamped displacement boundary is 
non-zero, i.e.~that $|\Gamma_c| > 0$, provides the coercive property 
$a(u,u) \geq \gamma_a \norw{u}{1}^2$ needed by both 
Definition~\ref{defn:original-stokes-biot-stability} (Stokes-Biot stability) and 
Definition~\ref{defn:minimal-stokes-biot-stability} (minimal Stokes-Biot stability); 
this is therefore a strict requirement of the proposed method.  However, 
if both $|\Gamma_c| > 0$ and $|\Gamma_t| > 0$ then, as noted in \cite{hong2020parameter} 
and used in \cite{lee2018}, the conditions \eqref{eq:bcs} can be relaxed to those of \eqref{eq:bcs:general}. In 
this case, the requirements of both Definition~\ref{defn:original-stokes-biot-stability} 
and Definition~\ref{defn:minimal-stokes-biot-stability} can be satisfied with 
$Q = L_2(\Omega)$ in \eqref{eq:biot:spaces}.  In this case, the variational 
formulations \eqref{eq:biot:var} and \eqref{eq:biot:full-var-disc} are, again, 
unaltered and the results of the manuscript follow analagously.  It is true, as 
discussed above, that restrictive boundary conditions, i.e.~such as \eqref{eq:bcs}, 
are needed when $\Gamma_c = \partial\Omega$ in order to ensure the tenets of 
both Definition~\ref{defn:original-stokes-biot-stability} and 
Definition~\ref{defn:minimal-stokes-biot-stability}; this is not an additional 
imposition of minimal Stokes-Biot stability (Definition~\ref{defn:minimal-stokes-biot-stability})
but rather of the Stokes-Biot perspective in general.

\section{Acknowledgements}
M.~E.~Rognes has received funding from the European Research Council (ERC) under 
the European Union's Horizon 2020 research and innovation programme under grant 
agreement 714892.  The research works of M.~E.~Rognes and T.~B.~Thompson were 
also supported by the Research Council of Norway under the FRINATEK Young 
Research Talents Programme; project number 250731/F20 (Waterscape). 
The work of K.-A.~Mardal was supported by the Research Council of Norway grant 
number 301013.  

\bibliographystyle{spmpsci}
\bibliography{references}
\end{document}